\documentclass[a4paper]{amsart}
\usepackage[utf8]{inputenc}
\usepackage[T1,T2A]{fontenc}
\usepackage[french,russian,english]{babel}
\usepackage[unicode]{hyperref}
\usepackage{amsmath,amssymb,amsfonts,amsthm}
\usepackage{graphicx}
\usepackage[backgroundcolor=white]{todonotes}

\usepackage{tikz}
\usepackage[backend=biber,firstinits,autolang=langname]{biblatex}
\addbibresource{disser.bib}

\usetikzlibrary{arrows}
\expandafter\addto\csname HyLang@english\endcsname{

}
\newtheorem{theorem}{Theorem}
\newtheorem*{theorem*}{Theorem}
\newtheorem*{maintheoremThm}{Main Theorem}
\newenvironment{maintheorem}{%
\begin{maintheoremThm}%
\phantomsection
\expandafter\def\csname @currentlabelname\endcsname{Main Theorem}
}{%
\end{maintheoremThm}%
}
\newtheorem{lemma}{Lemma}
\newtheorem{proposition}{Proposition}
\newtheorem*{conjecture*}{Conjecture}

\theoremstyle{remark}
\newtheorem{remark}{Remark}
\newtheorem*{remark*}{Remark}

\theoremstyle{definition}
\newtheorem*{definition*}{Definition}
\newtheorem{question}{Question}

\renewcommand{\Im}{\mathop{\mathrm{Im}}}
\newcommand{\eps}{\varepsilon}
\newcommand{\bbC}{\mathbb C}
\newcommand{\bbR}{\mathbb R}

\newcommand{\bbH}{\mathbb H}

\newcommand{\bbZ}{\mathbb Z}
\newcommand{\rot}{\mathrm {rot}\,}
\DeclareMathOperator{\id}{id}


\title{Complex rotation numbers: bubbles and their intersections}
  \subjclass[2010]{37E10, 37E45}
 \keywords{complex tori, rotation numbers, diffeomorphisms of the circle}

\author{Nataliya Goncharuk}
\email{ng432@cornell.edu}
\address{%
Cornell University\\
Department of Mathematics\\
310 Malott Hall\\
Ithaca, NY 14853-4201 USA}
\thanks{Supported by RFBR project 16-01-00748-a and Laboratory Poncelet.}
\thanks{Cornell University, Department of Mathematics}
\makeatletter
\hypersetup{
  pdftitle=\@title,
  pdfauthor=\authors,
  pdfkeywords=\@keywords,
  pdfsubject=\@subjclass
}
\makeatother

\begin{document}

\begin{abstract}
The construction of complex rotation numbers, due to V.Arnold, gives rise to a fractal-like set ``bubbles'' related to a circle diffeomorphism. ``Bubbles'' is a complex analogue to Arnold tongues.

This article contains a survey of the known properties of bubbles, as well as a variety of open questions. In particular, we show that bubbles can intersect and self-intersect, and provide approximate pictures of bubbles for perturbations of Möbius circle diffeomorphisms.

\end{abstract}
\maketitle

\section{Introduction}

\subsection{Complex rotation numbers. Arnold's construction}
In what follows,  $f\colon \bbR/\bbZ \to \bbR/\bbZ$ is an analytic orientation-preserving circle diffeomorphism. Its analytic extension to a small neighborhood of $\bbR/\bbZ$ in $\bbC/\bbZ$ is still denoted by $f$. $\bbH\subset \bbC$ is the open upper half-plane.

The following construction was suggested by V. Arnold  \cite[Sec. 27]{Arn-english} in 1978. Given $\omega\in \bbH/\bbZ$ and a small positive $\eps \in\bbR$, one can construct a complex torus $E(f+\omega)$ as the quotient space of a cylinder $\Pi$ by the action of $f+\omega$:
\begin{gather}
    \label{eq-strip}
    \Pi:=\{z \in \bbC/\bbZ \mid -\eps <\Im z < \Im\omega+\eps\},\\
    E(f+{\omega}) := \Pi /(z\sim f(z)+{\omega}).\notag
\end{gather}
For a small positive $\eps$, the quotient space $E(f+{\omega})$ is a torus, inherits a complex structure from $\bbC/\bbZ$ and does not depend on $\eps$.

Due to the Uniformization theorem, for a unique  $\tau\in \bbH/\bbZ$  there exists a biholomorphism 
\begin{equation}
\label{eq-Hw}
 H_{\omega} \colon E(f+{\omega}) \to \bbC / (\bbZ+\tau \bbZ)
\end{equation}
such that $H_{\omega}$ takes  $\bbR/\bbZ\subset E(f+\omega)$ to a curve homotopic to $\bbR/\bbZ\subset \bbC / (\bbZ+\tau \bbZ)$. The number $\tau({f+\omega}):=\tau \in \bbH/\bbZ$, i.e.\ the modulus of the complex torus $E(f+{\omega})$, is called the \emph{complex rotation number} of $f+{\omega}$.
The term is due to E. Risler, \cite{Ris}.

Complex rotation number $\tau(f+\omega)$ depends holomorphically on $\omega\in \bbH/\bbZ$, see \cite[Sec. 2.1, Proposition 2]{Ris}.

\subsection{Rotation number and its properties}
This section lists well-known results on rotation numbers; see \cite[Sec. 3.11, 3.12]{KaHa} for more details.

Let $f$ be an orientation-preserving circle homeomorphism, and let $F\colon \bbR \to \bbR$ be its lift to the real line. The limit
$$\rot f = \lim_{n\to \infty} \frac{F^{\circ n}(x)}{n} \mod 1$$
exists and does not depend on $x\in \bbR$. It is called the \emph{rotation number} of the circle homeomorphism $f$.

Rotation number is invariant under continuous conjugations of $f$. It is rational, $\rot f = \frac pq$, if and only if $f $ has a periodic orbit of period $q$. If $\rot f$ is irrational and $f\in C^2 (\bbR/\bbZ)$, then $f$ is continuously conjugate to  $z\mapsto z+\rot f$ (Denjoy Theorem, see \cite[Sec 3.12.1]{KaHa}). We will need the following, much more complicated result.
\begin{definition*}
 A real number $\rho$ is called \emph{Diophantine} if there exist $C>0$ and $\beta>0$ such that for all rationals $p/q$,
 $$
 \left|\rho-\frac{p}{q}\right|\ge \frac{C}{q^{2+\beta}}.
 $$
\end{definition*}
\begin{theorem} [M. R. Herman \cite{Herm}, J.-C. Yoccoz \cite{Yoc}]
\label{th-HermYoc}
 If an analytic circle diffeomorphism has a Diophantine rotation number $\rot f$, then it is  analytically conjugate to  $z\mapsto z+\rot f$.
\end{theorem}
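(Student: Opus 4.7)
The plan is to combine a KAM-type Newton iteration, which handles the case when $f$ is close to the rigid rotation $x \mapsto x+\rho$ with $\rho := \rot f$, together with an a priori regularity estimate that reduces the general case to the perturbative one. Seeking an analytic conjugacy of the form $H = \id + h$ satisfying $f\circ H = H\circ(\id+\rho)$ leads, after subtracting $x+\rho$, to the functional equation
\[
h(x+\rho) - h(x) = f(x + h(x)) - x - h(x) - \rho.
\]
Its linearization about $h=0$ is the classical cohomological equation $u(x+\rho)-u(x) = \psi(x)$, solved in Fourier modes by $\hat u(k) = \hat\psi(k)/(e^{2\pi i k\rho}-1)$ for $k\ne 0$. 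The Diophantine hypothesis bounds $|e^{2\pi i k\rho}-1|^{-1}$ by a constant times $|k|^{1+\beta}$, so if $\psi$ is holomorphic on a complex strip of half-width $r$, then $u$ is holomorphic on the strip of half-width $r-\delta$ with $\|u\|_{r-\delta}$ bounded by a constant times $\delta^{-(2+\beta)}\|\psi\|_r$.

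Next I would run a Newton iteration: set $f_0 := f$ and inductively $f_{n+1} := (\id+u_n)^{-1}\circ f_n\circ(\id+u_n)$, where $u_n$ solves the linearized cohomological equation with right-hand side $f_n - \id - \rho$. A standard quadratic estimate gives $\|f_{n+1}-\id-\rho\|_{r_{n+1}}$ bounded by a constant times $\delta_n^{-A}\|f_n-\id-\rho\|_{r_n}^2$ on shrinking strips $r_{n+1} = r_n - \delta_n$, for some exponent $A$ depending only on $\beta$. Choosing $\delta_n$ geometrically, quadratic convergence beats the polynomial losses, and provided $\|f_0-\id-\rho\|_{r_0}$ is small enough in terms of $r_0$ and the Diophantine constants of $\rho$, the composition of the maps $\id+u_n$ converges to an analytic conjugacy on a strip of positive width. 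This settles the theorem when $f$ is close to a rotation.

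The main obstacle, and the reason this is a deep theorem rather than a corollary of KAM, is the passage from this local statement to the global one, since a typical analytic $f$ with Diophantine rotation number is far from every rigid rotation. Here I would follow Herman's program: Denjoy's theorem produces a topological conjugacy, and sharp distortion estimates for the iterates $f^{\circ q_n}$ along the denominators $q_n$ of the continued-fraction convergents $p_n/q_n$ of $\rho$ can be used to promote it to $C^k$ regularity for some $k$, with bounds depending only on the Diophantine exponents of $\rho$. Once $k$ is sufficiently large, a high renormalization of $f$ --- an affinely rescaled version of $f^{\circ q_n}$ minus the integer translation $p_n$ --- becomes a small analytic perturbation of a rigid rotation on a fixed complex strip, so the local KAM step of the preceding paragraphs applies to it; unwinding the renormalization yields the desired analytic conjugacy of $f$ itself to $\id+\rho$. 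The hardest point is precisely the a priori $C^k$ bound on the Denjoy conjugacy: it demands effective, polynomial-in-the-Diophantine-data distortion estimates that are uniform across all analytic diffeomorphisms with rotation number $\rho$, together with careful bookkeeping of how much analyticity width is lost at each renormalization step. This is exactly the technical core of Herman's thesis and Yoccoz's subsequent simplification, and I would not expect to shortcut it.
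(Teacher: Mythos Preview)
The paper does not contain a proof of this theorem: it is quoted from the literature (Herman's thesis and Yoccoz's paper) and used as a black box, for instance in the proof of \autoref{prop-Dioph-dif}. So there is no ``paper's own proof'' to compare your proposal against.

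That said, your outline is a faithful high-level sketch of the actual Herman--Yoccoz argument: the local KAM/Newton step via the cohomological equation with Diophantine small-divisor estimates, followed by the global-to-local reduction through a priori regularity bounds on the Denjoy conjugacy and renormalization along continued-fraction denominators. You have also correctly identified where the real difficulty lies --- the uniform distortion and regularity estimates that bring an arbitrary analytic $f$ into the perturbative regime --- and honestly flagged that this core is not something one reproduces in a few lines. As a sketch this is accurate; as a proof it is, by your own admission, incomplete precisely at the decisive step, so it should be read as an outline with a pointer to \cite{Herm} and \cite{Yoc} rather than a self-contained argument.
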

This motivates the term ``complex rotation number'' for $\tau(f+\omega)$ above: while a circle diffeomorphism $f$ is conjugate to the rotation $x \mapsto x+\rot f$ on $\bbR/\bbZ$, a complex-valued map $f+\omega$ is biholomorphically conjugate to the complex shift $z \mapsto z+\tau(f+\omega)$ in the cylinder $\Pi \subset \bbC/\bbZ$.
 
\subsection{Steps on the graph of \texorpdfstring{$\omega\mapsto \rot(f+\omega)$}{ω↦rot(f+ω)}.}

Rotation number depends continuously on $f$ in $C^0$-topology. In particular, $\rot(f+{\omega})$ depends continuously on $\omega\in \bbR/\bbZ$; clearly, it (non-strictly) increases on $\omega$.

Recall that a periodic  orbit of a circle diffeomorphism is  called \emph{parabolic} if its multiplier is one, and \emph{hyperbolic} otherwise. If a circle diffeomorphism has  periodic orbits, and they are all hyperbolic, then the diffeomorphism is called \emph{hyperbolic}.

Let $I_{\frac pq}:=\{\omega\in \bbR/\bbZ \mid \rot(f+{\omega})=\frac pq\}$; from now on, we always assume that $p,q$ are coprime. If  for some value of $\omega$, $f+\omega$ has the rotation number $p/q$ and a \emph{hyperbolic} orbit of period $q$, then this orbit persists under a small perturbation of $\omega$. In this case, $I_{p/q}$ is a segment of a non-zero length. Endpoints of $I_{p/q}$ correspond to  diffeomorphisms $f+\omega$ having only parabolic orbits.

In a generic case, the graph of the function $\omega\mapsto \rot (f+\omega)$ contains infinitely many steps, i.e.\ non-trivial segments $I_{\frac pq}\times \{\frac pq\}$, on rational heights.

\subsection{Rotation numbers as boundary values of a holomorphic function}

\begin{question}
    Can we find a holomorphic self-map $\tau$ on $\bbH/\bbZ$ such that its boundary values on $\bbR/\bbZ$ coincide with $\omega\mapsto \rho(f+\omega)$?
\end{question}
 
The answer is No (except for the trivial case $f(x)=x+c$), because the function  $\omega\mapsto \rho(f+\omega)$  is locally constant on non-empty intervals   $I_{\frac pq}$, and this is not possible for boundary values of holomorphic functions. In more detail, note that $\bbH/\bbZ$ is biholomorphically equivalent to the punctured unit disc $D\setminus \{0\}$, so  the map $\frac{1}{2\pi i}\ln z \colon  D\setminus \{0\}\to \bbH/\bbZ$ conjugates $\tau$ to a holomorphic bounded self-map of the punctured unit disc. Clearly, $0$ is a removable singularity for this self-map. The following Luzin--Privalov theorem (see \cite[Section 14, p. 159]{LP}) shows that such extension $\tau$ does not exist:
\begin{theorem}[N. Luzin, J. Privalov]
\label{th-lp}
 If a holomorphic function in the unit disc $D$ has finite non-tangential limits at all points of  $E \subset \partial D$, where $E$ has a non-zero Lebesgue measure, then this function is uniquely defined by these limits.
\end{theorem}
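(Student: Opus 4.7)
The plan is to prove the equivalent boundary uniqueness statement: if a holomorphic function $g$ on $D$ has non-tangential limit $0$ at every point of a set $E\subset \partial D$ of positive Lebesgue measure, then $g\equiv 0$. The theorem as stated follows by applying this to the difference of two candidate functions with identical non-tangential limits on $E$.

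I would first dispatch the bounded case, which is all that the present paper actually uses (the composition $\tau\circ\frac{1}{2\pi i}\ln z$ is a bounded holomorphic self-map of $D\setminus\{0\}$, and the puncture is a removable singularity, yielding a bounded holomorphic $g$ on $D$). Assuming $g$ is bounded and not identically zero, Fatou's theorem gives non-tangential boundary values $g^*$ almost everywhere on $\partial D$. Applying Jensen's inequality at $0$ and letting $r\to 1^-$ with Fatou's lemma yields
\begin{equation*}
-\infty<\log|g(0)|\le \frac{1}{2\pi}\int_0^{2\pi}\log|g^*(e^{i\theta})|\,d\theta,
\end{equation*}
so $\log|g^*|\in L^1(\partial D)$. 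In particular $g^*\ne 0$ almost everywhere on $\partial D$, contradicting the assumption $g^*=0$ on the set $E$ of positive measure.

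For the general (possibly unbounded) case I would reduce to the bounded case by truncation. Egorov's theorem produces a subset $E'\subset E$ of positive measure on which the non-tangential convergence to $0$ is uniform along a fixed Stolz cone aperture. Thus $g$ is bounded on the union $U$ of these truncated Stolz cones based on $E'$, and the part of $\partial U$ sitting on $\partial D$ contains $E'$, a set of positive linear measure on which $g$ has non-tangential limit $0$. Uniformizing a simply connected component of $U$ conformally to $D$ reduces the problem to the bounded case settled above.

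The main obstacle is this last step: controlling how the conformal map from the Stolz cone region onto $D$ transports $E'$, i.e.\ verifying that $E'$ is sent to a set of positive linear measure on $\partial D$ and that non-tangential limits at points of $E'$ pull back to non-tangential limits at their images. This is the delicate classical part of the Luzin–Privalov argument and is settled using F.~and M.~Riesz-type absolute continuity results for conformal maps of Jordan domains. For the paper's application only the bounded step is actually needed, so this reduction can be bypassed.
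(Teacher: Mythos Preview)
The paper does not give its own proof of this theorem; it is simply quoted, with a reference to the original Luzin--Privalov paper. There is therefore no in-paper argument to compare against.

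That said, your outline is the standard one and is correct. The bounded case is complete: Jensen's formula together with Fatou's theorem yields $\log|g^*|\in L^1(\partial D)$ for a nonzero bounded holomorphic $g$, so $g^*$ cannot vanish on a set of positive measure. Your reduction of the unbounded case to the bounded one via Egorov and a sawtooth (``Privalov ice-cream cone'') region is exactly the classical route, and you have located the one genuinely delicate point correctly: one needs F.~and M.~Riesz--type absolute continuity of the boundary correspondence for the Riemann map of the sawtooth domain, so that $E'$ goes to a set of positive measure and non-tangential approach is preserved. Your remark that the paper only ever invokes the theorem for a bounded function (after removing the singularity at $0$) is also accurate, so for the application at hand your first paragraph already suffices.
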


This motivates the next question:
\begin{question}
    Can we find a holomorphic self-map on $\bbH/\bbZ$ such that its boundary values  on $(\bbR/\bbZ)\setminus \bigcup I_{p/q}$ coincide with  $\omega\mapsto \rho(f+\omega)$?
\end{question}
\begin{remark*}
The set $(\bbR/\bbZ)\setminus \bigcup I_{p/q}$ has non-zero measure due to the result of M.~R.~Herman, see \cite[Sec. 6, p. 287]{H77}; so by \autoref{th-lp}, such holomorphic extension must be unique.
\end{remark*}

The answer to this question is Yes, and this holomorphic function is the complex rotation number $\tau(f+\omega)$.  The following theorem is proved in \cite{XB_NG}; the proof is based on previous results by E. Risler,  V. Moldavskij, Yu.~Ilyashenko, and N. Goncharuk \cite{Ris,M-en,YuIM,NG2012-en}).
\begin{theorem}[X. Buff, N. Goncharuk \cite{XB_NG}]
\label{th-XB_NG}
Let $f\colon \bbR/\bbZ \to \bbR/\bbZ$ be an orientation-preserving analytic circle diffeomorphism.  Then the holomorphic function ${\tau(f+\cdot)} \colon \bbH/\bbZ \to \bbH/\bbZ$ has a continuous extension ${\bar \tau(f+\cdot)} \colon \overline{\bbH/\bbZ} \to \overline{\bbH/\bbZ}$. Assume $\omega \in \bbR/\bbZ$.
\begin{itemize}
 \item If $\rot (f+{\omega})$ is irrational, then $\bar \tau(f+\omega)=\rot(f+{\omega})$.
 \item If $\rot (f+{\omega})$ is rational and $f+{\omega}$ has a parabolic periodic orbit, then $\bar \tau(f+\omega)=\rot (f+{\omega})$.
 \item If $\rot (f+{\omega})$ is rational and $f+{\omega}$ is hyperbolic on an open interval  $\omega\in I\subset \bbR/\bbZ$, then $\bar \tau(f+\omega)$ depends analytically on $\omega\in I$ and $\bar \tau(f+\omega)\in \bbH/\bbZ$ for $\omega \in I$.
\end{itemize}
\end{theorem}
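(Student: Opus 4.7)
The content of the theorem is the boundary analysis of a function already known to be holomorphic on $\bbH/\bbZ$, so my plan is to fix $\omega_0\in \bbR/\bbZ$ and, in each of the three regimes of the conclusion, construct a model for the uniformization of $E(f+\omega)$ valid for $\omega\in\bbH/\bbZ$ near $\omega_0$, from which the limit of $\tau(f+\omega)$ as $\omega\to\omega_0$ can be read off. Since $\bbR/\bbZ$ is partitioned into the three regimes, patching local continuity statements across them will give the claimed continuous extension to $\overline{\bbH/\bbZ}$.

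\emph{Hyperbolic case (third bullet).} Suppose $f+\omega_0$ has rotation number $p/q$ and a hyperbolic $q$-periodic orbit, so that the multiplier $\lambda_0$ of the iterate satisfies $|\lambda_0|\neq 1$. Applying the implicit function theorem to $(f+\omega)^{\circ q}(z)-z-p=0$, the periodic orbit and its multiplier $\lambda(\omega)$ extend holomorphically in $\omega$ to a complex neighborhood $U$ of $I$. I would realize $E(f+\omega)$ as a Koenigs-type torus: Koenigs linearization near the persistent periodic orbit conjugates $(f+\omega)^{\circ q}$ to $w\mapsto \lambda(\omega)\,w$ on a model annulus, and the quotient torus has modulus
\[
\bar\tau(f+\omega) \;=\; \frac{1}{2\pi i\,q}\log\lambda(\omega) \pmod{\bbZ},
\]
an analytic function of $\omega\in U$ that lies in $\bbH/\bbZ$ precisely when $|\lambda(\omega)|\neq 1$. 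Holomorphic agreement with $\tau$ on $U\cap\bbH/\bbZ$ completes this case.

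\emph{Irrational case (first bullet).} For Diophantine $\rho_0:=\rot(f+\omega_0)$, Herman--Yoccoz (\autoref{th-HermYoc}) provides an analytic conjugacy of $f+\omega_0$ to $R_{\rho_0}\colon z\mapsto z+\rho_0$, which induces a biholomorphism $E(f+\omega_0)\simeq \bbC/(\bbZ+\rho_0\bbZ)$; hence $\bar\tau(f+\omega_0)=\rho_0$ at such $\omega_0$, a set of full measure in $\bbR/\bbZ$ by Herman's theorem. To promote this to a continuous boundary limit from $\bbH/\bbZ$ and to cover Liouville irrationals, I would use a Denjoy (merely topological) conjugacy of $f+\omega_0$ to $R_{\rho_0}$, extend it quasiconformally to a homeomorphism of $\Pi$ onto a rotation strip for $R_{\rho_0}$, and apply a Grötzsch-type extremal-length comparison to show that the modulus of $E(f+\omega)$ tends to $\rho_0$ as $\omega\to\omega_0$ from $\bbH/\bbZ$.

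\emph{Parabolic case (second bullet).} When $f+\omega_0$ has rotation number $p/q$ and only parabolic $q$-periodic orbits, $g_\omega:=(f+\omega)^{\circ q}-p$ is a one-parameter deformation of a map with a parabolic fixed point. Parabolic implosion theory (Fatou coordinates and Écalle--Voronin horn maps, à la Lavaurs--Shishikura) yields, for $\omega$ in a cuspidal sector of $\bbH/\bbZ$ based at $\omega_0$, a semi-conjugacy of $g_\omega$ to the translation $z\mapsto z+1$ on wide vertical strips; gluing these strips by the dynamics reconstructs $E(f+\omega)$ and identifies its modulus with an expression tending to $p/q$ as $\omega\to\omega_0$. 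The hard step, and the main obstacle of the whole theorem, lives here: one must control the Fatou coordinates quantitatively as $\omega$ approaches $\omega_0$, verifying that the modulus degenerates to the cusp $p/q$ of $\bbH/\bbZ$ rather than to some other rational and that this degeneration is uniform in the direction of approach. In contrast, the hyperbolic case is essentially the implicit function theorem together with Koenigs linearization, and the irrational case reduces, via density of Diophantine rotations and continuity of $\rot$, to standard quasiconformal estimates; the parabolic step is where genuine new work is needed and is, not coincidentally, the source of the ``bubble'' phenomena that motivate the rest of the paper.
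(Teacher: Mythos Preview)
The paper does not itself prove this theorem; it is quoted from \cite{XB_NG} and only the statement is used. So there is no in-paper proof to compare against. However, \autoref{sec-expl-constr} and \autoref{th-Buff-constr} describe the object $\mathcal E(f)$ that governs the hyperbolic boundary values, and comparing your sketch to that construction exposes a genuine error.

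Your hyperbolic case is wrong. You assert that Koenigs linearization at a single persistent periodic orbit identifies the quotient torus with a linear model and gives $\bar\tau(f+\omega)=\frac{1}{2\pi i q}\log\lambda(\omega)$. This formula cannot hold in general: evaluated at a real $\omega_0$ it is purely imaginary (the multiplier of a real hyperbolic orbit is real positive), whereas \autoref{lem-realiz} shows that $\bar\tau$ of hyperbolic diffeomorphisms with $\rot=0$ realizes \emph{every} point of $\bbH/\bbZ$. The formula you wrote is exactly the special M\"obius computation of \autoref{prop-frac}, where the dynamics is globally linearizable; it does not extend. The correct description, given in \autoref{rem-Ef-glue}, is that $\mathcal E(f+\omega_0)$ is the gluing of the annuli $A^{\pm}(\lambda_j)$ for \emph{all} periodic points by the transition maps $\psi_{j+1}^{-1}\circ\psi_j$ between their linearizing charts; its modulus depends on all multipliers and all transition data, not on a single $\lambda$. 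Analyticity in $\omega$ and membership in $\bbH/\bbZ$ do hold on intervals of hyperbolicity, but they come from the analytic dependence of $\gamma$ and of the quotient $\mathcal E(f+\omega)$ on $\omega$ together with \autoref{th-Buff-constr}, not from a one-multiplier formula.

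Your irrational and parabolic outlines are, by contrast, along the lines actually used in \cite{XB_NG} and its predecessors \cite{Ris,M-en,YuIM,NG2012-en}: the Diophantine case via \autoref{th-HermYoc}, Liouville rotation numbers by approximation and modulus estimates, and the parabolic endpoints via Fatou coordinates. You have also correctly located the parabolic case as the place where the substantive analysis lives.
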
  
The extension $\bar \tau(f+{\omega})$ is also called the complex rotation number of $f+\omega$. Due to \autoref{th-XB_NG}, it is continuous on $\omega$, and coincides with the ordinary rotation number on $\bbR/\bbZ \setminus \bigcup I_{p/q}$.

  \begin{definition*}
  The image of the segment $I_{\frac pq}=\{\omega\in \bbR/\bbZ \mid \rot(f+{\omega})=\frac pq\}$ under the map $\omega \mapsto \bar \tau(f+\omega)$ is called the \emph{$\frac pq$-bubble} of $f$.
 \end{definition*}

Due to \autoref{th-XB_NG}, the $\frac pq$-bubble is a union of several analytic curves in the upper half-plane with endpoints at $\frac pq$. Each analytic curve corresponds to the interval of hyperbolicity of $f+{\omega}$, and its endpoints correspond to $f+\omega$ with parabolic orbits.

So, each circle diffeomorphism $f$ gives rise to a ``fractal-like'' set $\bar \tau(f+\omega)$ (bubbles) in the upper half-plane, containing countably many analytic curves. The possible shapes of bubbles are not known. The following question is also open.

\begin{question}
 Is the set $\bar \tau(f+\omega)$ self-similar (i.e. is it a fractal set)?
\end{question}

\subsection{Properties of bubbles and \nameref{thm-main}}

\begin{question}
    Is $\bar \tau$ invariant under analytic conjugacies?
\end{question}
The answer is Yes:
\begin{lemma}
\label{lem-invar}
Complex rotation number $\bar \tau$ is invariant under analytic conjugacies: for two analytically conjugate circle diffeomorphisms $f_1,f_2$, we have $\bar \tau(f_1) = \bar \tau(f_2)$.
\end{lemma}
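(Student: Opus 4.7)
Write $f_2 = h^{-1}\circ f_1\circ h$, where $h$ is an analytic orientation-preserving circle diffeomorphism; $h$ extends biholomorphically to a neighbourhood of $\bbR/\bbZ$ in $\bbC/\bbZ$. We interpret $\bar\tau(f_i)$ as the boundary value $\bar\tau(f_i+0)$, which exists by \autoref{th-XB_NG}. The plan is to split into the three cases of that theorem, applied at $\omega=0$.

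If the common rotation number $\rho:=\rot f_1=\rot f_2$ is irrational, or if $\rho=p/q$ and $f_i$ admits a parabolic $q$-periodic orbit, then $\bar\tau(f_i+0)=\rho$ by \autoref{th-XB_NG}. Since the rotation number is a topological conjugacy invariant, $\bar\tau(f_1)=\bar\tau(f_2)$ is immediate.

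The substantive case is when $\rho=p/q$ and $f_i$ is hyperbolic, so that $0$ lies in the interior of $I_{p/q}$ and $\bar\tau(f_i+0)\in\bbH/\bbZ$. A naive attempt to push the quotient $E(f_2+\omega)=\Pi/(z\sim f_2(z)+\omega)$ forward by $h$ breaks down because $h$ does not commute with translation by $\omega$, so it does not conjugate $f_1+\omega$ to $f_2+\omega$ for $\omega\ne 0$. Instead, I would argue that $\bar\tau(f_i+0)$ is a local holomorphic invariant of the hyperbolic $q$-periodic orbits of $f_i$: by Koenigs linearisation, the germ of $f_i^{\circ q}$ at each such orbit is determined up to analytic conjugacy by its multiplier $\mu:=(f_i^{\circ q})'(x_i)\ne 1$, and $h$ carries the orbits of $f_1$ to those of $f_2$ with the same multipliers. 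This yields $\bar\tau(f_1)=\bar\tau(f_2)$.

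The main obstacle is making the locality precise---showing that $\bar\tau(f_i+0)=\lim_{\Im\omega\searrow 0}\tau(f_i+\omega)$ depends only on the germs of $f_i$ at its hyperbolic orbits. The plan for this step is to choose, for small $\Im\omega>0$, a fundamental domain for $f_i+\omega$ on $\Pi$ adapted to the persistent hyperbolic $q$-orbits of $f_i+\omega$; holomorphic linearisation near each orbit then reduces the leading contribution to the modulus of $E(f_i+\omega)$ to that of the linear model $z\mapsto\mu z$, so the limit depends only on the multipliers. This is essentially the computation carried out in \cite{Ris,XB_NG}.
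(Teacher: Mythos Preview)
Your treatment of the non-hyperbolic case is fine and matches the paper. The hyperbolic case, however, contains a genuine error: the assertion that ``the limit depends only on the multipliers'' is false, and with it the preceding claim that $\bar\tau$ is a ``local holomorphic invariant'' determined by the Koenigs data at each periodic point separately. The modulus of the limiting torus also depends on the transition maps $\psi_{j+1}^{-1}\circ\psi_j$ between linearising charts; this is exactly the content of \autoref{rem-Ef-glue}, and \autoref{lem-realiz} together with \autoref{th-moduli-classif} shows that by varying these transition maps one realises \emph{every} value $w\in\bbH/\bbZ$ as $\bar\tau(f)$ for a diffeomorphism with $2m$ fixed points. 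So two hyperbolic diffeomorphisms with identical multipliers but different transition maps will in general have different complex rotation numbers, and your chain ``same multipliers $\Rightarrow$ same germs up to conjugacy $\Rightarrow$ same $\bar\tau$'' breaks at the second arrow: individual conjugacy of germs at each fixed point does not yield a single global conjugacy and does not force $\bar\tau$ to agree.

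The paper's argument avoids any locality claim. For hyperbolic $f$ it builds the torus $\mathcal E(f)$ directly at $\omega=0$ (\autoref{sec-expl-constr}): choose a loop $\gamma$ winding above attractors and below repellors with $f(\gamma)$ above $\gamma$, and set $\mathcal E(f)=\tilde\Pi/f$. \autoref{th-Buff-constr} identifies $\tau(\mathcal E(f))$ with $\bar\tau(f)$. Conjugation invariance is then immediate: if $f_2=h^{-1}f_1 h$ with $h$ analytic, then $h$ carries an admissible $\gamma$ for $f_2$ to an admissible curve for $f_1$ and descends to a biholomorphism $\mathcal E(f_2)\to\mathcal E(f_1)$ respecting the marking, so the moduli coincide. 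Your ``plan for the main obstacle'' (fundamental domains adapted to the hyperbolic orbits) is in fact pointing at this very construction; the fix is not to extract multipliers from it, but to observe that the whole quotient $\mathcal E(f)$ is manifestly chart-independent and then transport it by $h$.
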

For non-hyperbolic $f_1,f_2$, their complex rotation numbers coincide with rotation numbers, so this lemma trivially repeats the invariance of rotation numbers under conjugacies. For hyperbolic diffeomorphisms, the proof of this lemma is implicitly contained in \cite{XB_NG}, see also \autoref{sec-expl-constr} below.

Note that in general, for conjugate $f_1,f_2$ and $\omega\in \overline{\bbH}/\bbZ$, the numbers $\overline \tau(f_1+\omega)$ and $\overline \tau(f_2+\omega)$ do not coincide.

\begin{question}
    Is there an explicit formula for  $\overline \tau(f+\omega)$?
\end{question}
The only case when the author can obtain an explicit formula for  $\overline \tau(f+\omega)$ is described in the following proposition.

 Let $\pi \colon \bbC/\bbZ \to \bbC^*$ be given by $\pi(z) := \exp(2\pi i z)$.
\begin{proposition}
\label{prop-frac}
 Let $F$ be a Möbius map that preserves the circle $\{|w|=1 \mid w\in \bbC\}$. Let $f\colon \bbR/\bbZ \to \bbR/\bbZ$ be given by $f:= \pi^{-1}\circ F\circ \pi$. Then $f$ has only a $0$-bubble, and this bubble is a vertical segment. 
\end{proposition}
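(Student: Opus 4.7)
The plan is to reduce the Arnold construction for $f + \omega$ to the linear dynamics of a Möbius map. Setting $\lambda := e^{2\pi i \omega}$ and using $\pi(z+\omega) = \pi(z)\lambda$, one gets $f + \omega = \pi^{-1}\circ G\circ \pi$ with $G := \lambda F$. Crucially, $G$ is itself a Möbius transformation; for $\omega \in \bbR/\bbZ$ it preserves $D := \{|w|<1\}$, and for $\omega \in \bbH/\bbZ$ (so $|\lambda|<1$) it sends $\overline D$ strictly into $D$.

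First I would pin down which bubbles can be non-trivial. For real $\omega$, $\rot(f+\omega)$ equals the rotation number of the Möbius self-map $G$ of $S^1$. A Möbius element of $\operatorname{Aut}(D)$ has rational rotation number $p/q\ne 0$ only if it is elliptic of finite order $q$, which is a discrete condition on $\lambda$; hence $I_{p/q}$ is at most one point and the $p/q$-bubble is a single point of $\bbR/\bbZ$. On the other hand $\rot(f+\omega)=0$ iff $G$ is hyperbolic or parabolic, which defines a closed arc $I_0\subset\bbR/\bbZ$ bounded by the (at most two) parabolic values of $\lambda$.

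The main step is computing $\bar \tau(f+\omega)$ on $\operatorname{int}(I_0)$ by continuity from $\bbH/\bbZ$. For $\omega\in\bbH/\bbZ$, $G$ has two distinct fixed points: an attractor $w_1\in D$ and a repeller $w_2\in\bbC\cup\infty\setminus \overline D$ (from Vieta, $|w_1 w_2|=|\lambda|<1$, forcing one inside and the other outside). Let $\mu := G'(w_1) \in D\setminus\{0\}$. A global Möbius change of coordinates $\phi$ sending $w_1\mapsto 0$, $w_2\mapsto\infty$ conjugates $G$ to the pure dilation $z\mapsto \mu z$, so the Arnold torus $E(f+\omega)=\pi(\Pi)/(w\sim G(w))$ is biholomorphic, via $\phi$, to the standard quotient $\bbC^*/(z\sim\mu z)$, an elliptic curve with modulus $\log\mu/(2\pi i)$. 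Therefore
\begin{equation*}
\tau(f+\omega) \equiv \frac{\log\mu(\omega)}{2\pi i} \pmod{\bbZ}, \qquad \omega\in\bbH/\bbZ.
\end{equation*}
Taking boundary values, as $\omega$ tends to $\omega_0\in\operatorname{int}(I_0)$, $G$ tends to a hyperbolic element of $\operatorname{Aut}(\overline D)$, its fixed points land on $S^1$, and $\mu$ becomes a positive real number in $(0,1)$ because every hyperbolic element of $\mathrm{PSU}(1,1)$ is Möbius-conjugate to a real dilation on the half-plane. By the continuity of $\bar\tau$ (\autoref{th-XB_NG}), $\bar\tau(f+\omega_0)=-i\log\mu/(2\pi)\in i\bbR_{>0}$. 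As $\omega$ traverses $I_0$, $\mu(\omega)$ varies continuously in $(0,1]$ with $\mu=1$ at the parabolic endpoints, so the image is a vertical segment on the imaginary axis anchored at $0$.

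The main technical point is the identification $E(f+\omega) \cong \bbC^*/(z\sim \mu z)$. For $\eps$ small enough the annulus $\pi(\Pi)=\{|\lambda|e^{-2\pi\eps}<|w|<e^{2\pi\eps}\}$ avoids both fixed points (one checks $|w_1|<|\lambda|$ and $|w_2|>1$) and contains $S^1$, which represents a generator of $\pi_1(\bbC\cup\infty\setminus\{w_1,w_2\})\cong\bbZ$. Since every forward orbit of $G$ enters $D$ and every backward orbit leaves it, every $\langle G\rangle$-orbit crosses $S^1$; a standard covering-space argument then shows that the natural map $\pi(\Pi)/\langle G\rangle\to (\bbC\cup\infty\setminus\{w_1,w_2\})/\langle G\rangle$ is a biholomorphism of tori.
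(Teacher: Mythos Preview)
Your proof is correct and follows essentially the same route as the paper: conjugate $f+\omega$ via $\pi$ to the M\"obius map $G=\lambda F$, linearize $G$ at its attracting fixed point to identify $E(f+\omega)$ with $\bbC^*/(z\sim\mu z)$, read off $\tau=\frac{1}{2\pi i}\log\mu$, and then pass to the boundary using that hyperbolic elements of $\mathrm{PSU}(1,1)$ have real positive multiplier. The one minor difference is in handling the ``only a $0$-bubble'' clause: the paper computes $\bar\tau$ on all of $\bbR/\bbZ$ (for elliptic $G$ the Schwarz lemma gives $|\mu|=1$, hence $\bar\tau\in\bbR/\bbZ$), whereas you argue a~priori that a M\"obius self-map of $S^1$ with nonzero rational rotation number must be elliptic of finite order, so $I_{p/q}$ degenerates to a point.
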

\begin{proof}
First, let us compute $\tau(f+\omega)$ for $\omega \in \bbH/\bbZ$.

Put $F_{\omega}:=e^{2\pi i \omega} F$. For $\omega\in \bbH/\bbZ$ and small $\eps>0$, let $E^*(F_\omega)$ be the quotient space of the annulus $\Pi^*:= \{1>w>|e^{2\pi i (\omega+\eps)}|\}$ via the map $F_{\omega}$. Note that the map $\pi$ induces a biholomorphism of $E(f+\omega)$ to $E^*(F_\omega)$. Indeed, it  takes $\Pi$ to $\Pi^*$ and conjugates $f+\omega$ to $F_{\omega}=\pi\circ(f+\omega)\circ \pi^{-1}$. So $\tau(f+\omega)$ is equal to the modulus of $E^*(F_\omega)$.

The map $F_\omega$ is a Möbius map that takes the unit circle to the interior of the unit disc. Let $A_{\omega}$ be its attractor with multiplier $\mu(\omega)$ and $R_{\omega}$ be its repellor. The map $\frac{w-A_{\omega}}{w-R_{\omega}}$ conjugates $F_{\omega}$ to the linear map $w\mapsto \mu(\omega) w$, thus induces a biholomorphism of $E^*(F_\omega)$ to  the complex torus $\bbC^* / (w\sim \mu(\omega)w)$. The modulus of this torus is equal to $\frac{1}{2\pi i }\ln \mu(\omega)$.
Finally, $\tau(f+\omega)=\frac{1}{2\pi i }\ln \mu(\omega)$.

Now let us study the boundary values of $\tau(f+\omega)$, i.e. $\overline \tau(f+\omega_0) = \lim_{\omega\to \omega_0} \tau(f+\omega)$ for $\omega_0\in \bbR/\bbZ$.

The map $F_{\omega_0}$ is a Möbius self-map of the unit circle. If it has two hyperbolic fixed points on the unit circle (i.e. $\omega_0$ is an interior point of  $I_0$), then the multiplier of its attractor, $\mu(\omega_0)$, is real because $F_{\omega_0}$ preserves the unit circle. Then $\overline \tau(f+\omega_0)  = \lim_{\omega\to \omega_0} \frac{1}{2\pi i }\ln \mu(\omega) = \frac{1}{2\pi i }\ln \mu(\omega_0) \in i\bbR$. If $F_{\omega_0}$ has one parabolic fixed point on the unit circle, then $\lim_{\omega\to \omega_0} \mu(\omega) = 1$, and $\overline \tau(f+\omega_0) =0$.
If $F_{\omega_0}$ has no fixed points on the unit circle (i.e. $\omega_0\in (\bbR/\bbZ) \setminus I_0$), then it has a unique fixed point $A_{\omega_0}$ inside the unit disc and a unique fixed point $R_{\omega_0}$ outside it; the Schwarz lemma implies that the multiplier of $A_{\omega_0}$ satisfies $|\mu(\omega_0)|=1$, so $\overline \tau(f+\omega_0) = \lim_{\omega\to \omega_0} \frac{1}{2\pi i }\ln \mu(\omega) = \frac{1}{2\pi i }\ln \mu(\omega_0) \in \bbR/\bbZ$.

Finally, the image of $I_0$ under $\overline \tau(f+\cdot)$ belongs to $i\bbR$, and the image of $(\bbR/\bbZ) \setminus I_0$ belongs to $\bbR/\bbZ$. We conclude that the only bubble of $f$ is a $0$-bubble, and it is a vertical segment.
\end{proof}

\begin{question}
    Is there a way to compute $\overline{\tau}(f+\omega)$ approximately?
\end{question}

\graphicspath{{bubbles-pic/}}
\begin{figure}[h]
 \begin{center}
  \includegraphics[width=0.5\textwidth]{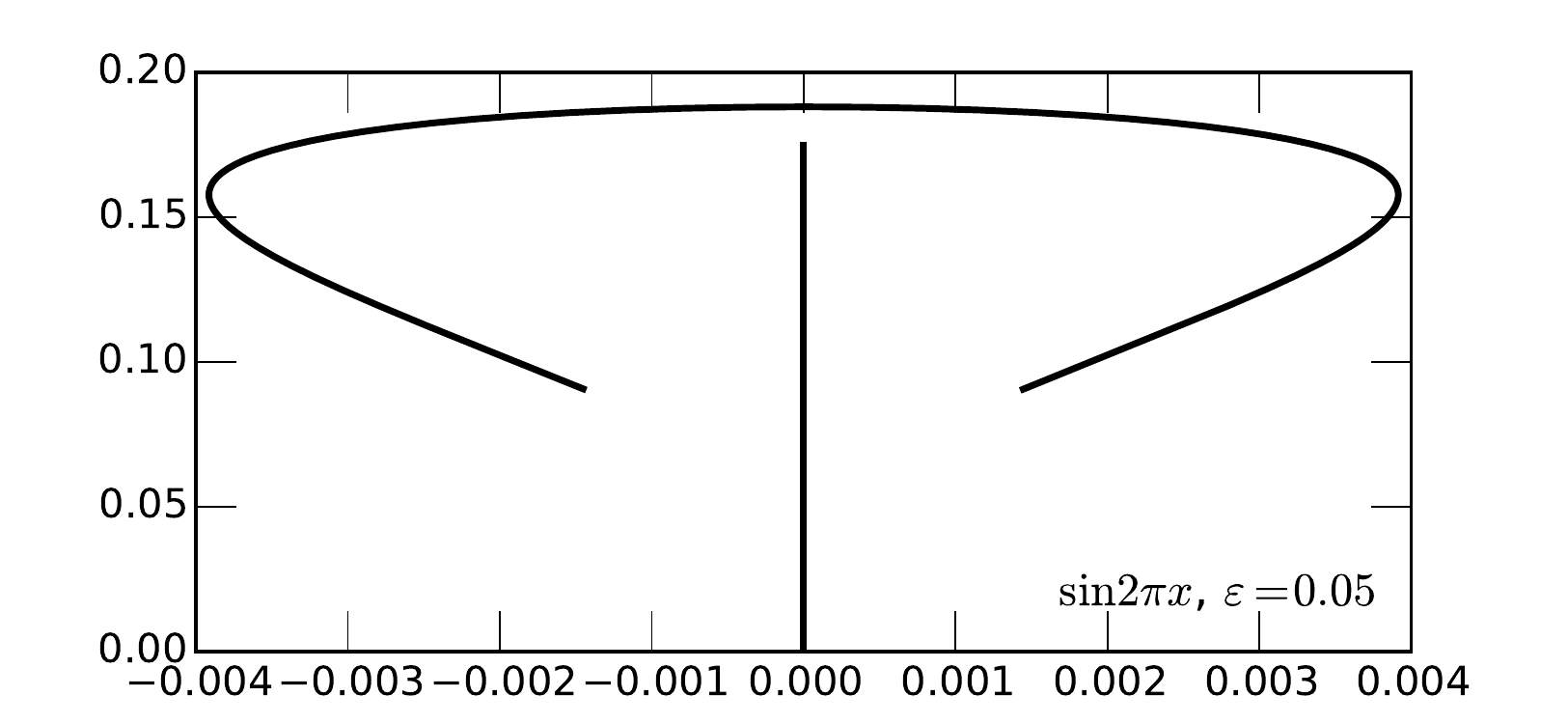}\hfil \includegraphics[width=0.5\textwidth]{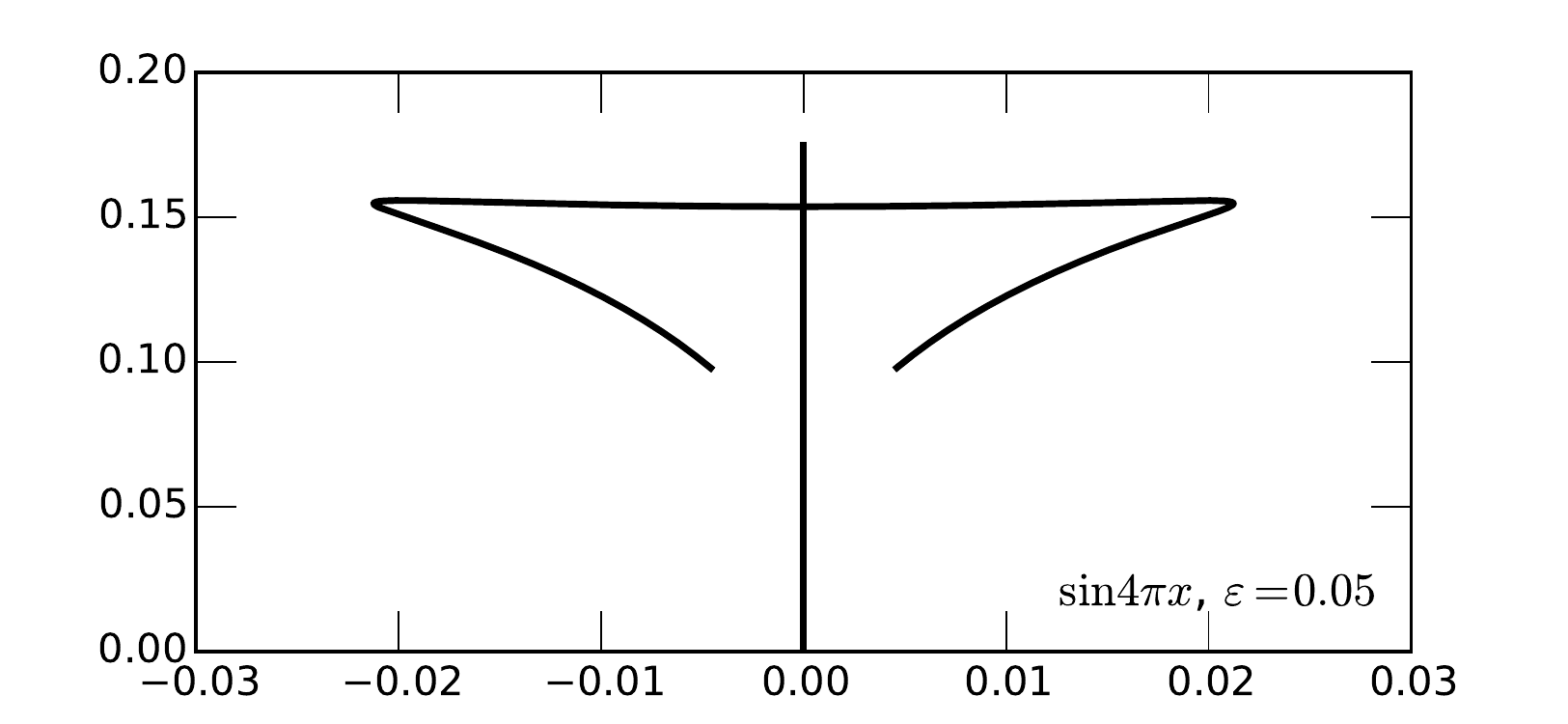}

  \includegraphics[width=0.5\textwidth]{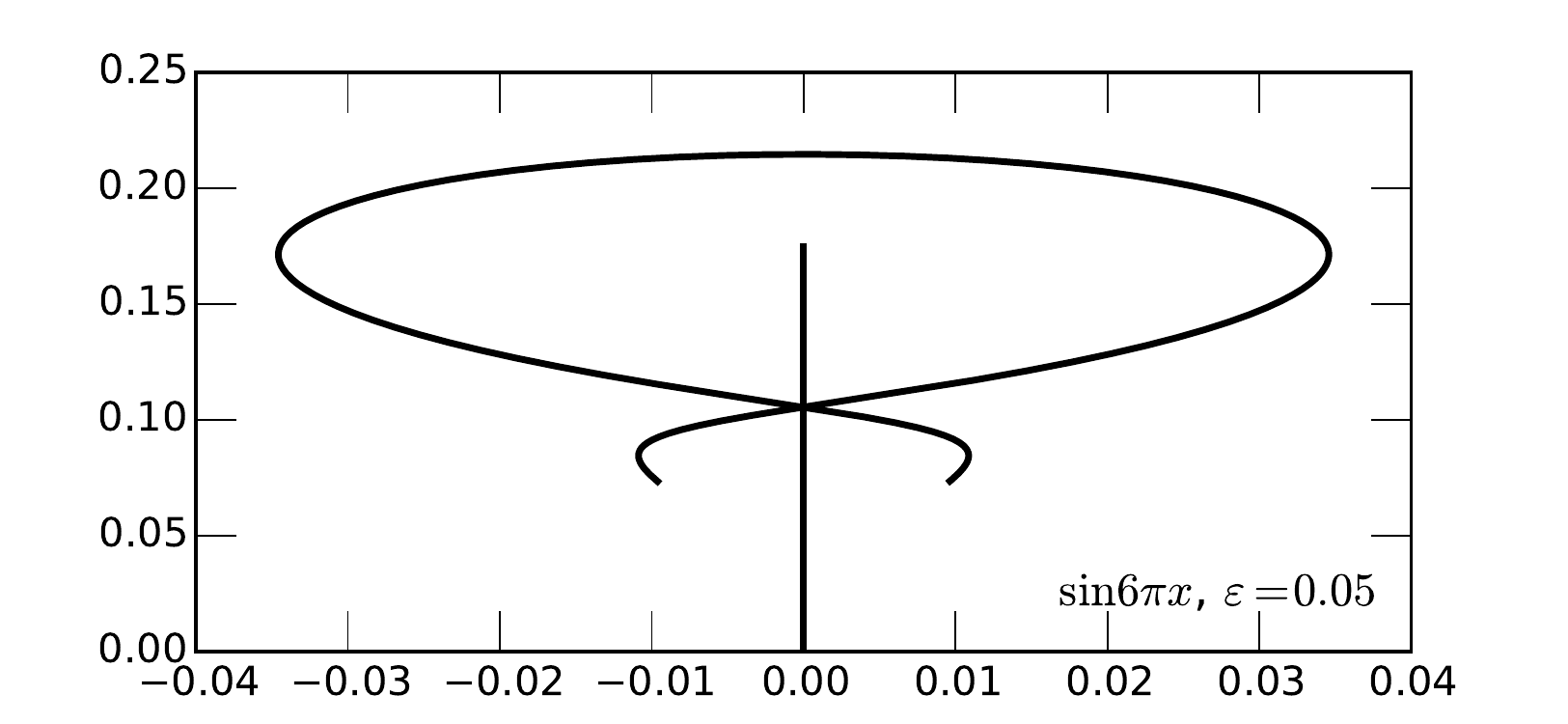}\hfil  \includegraphics[width=0.5\textwidth]{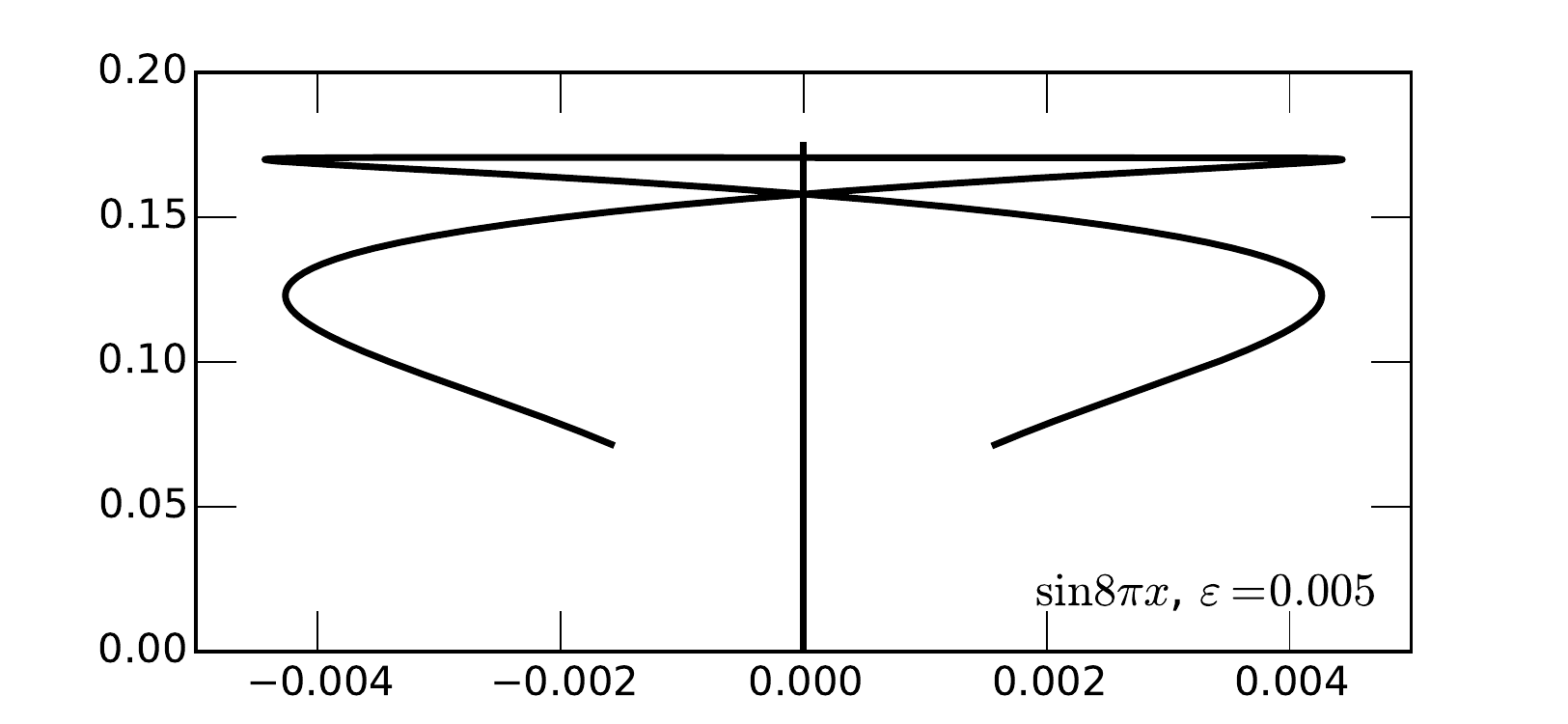}

  \includegraphics[width=0.5\textwidth]{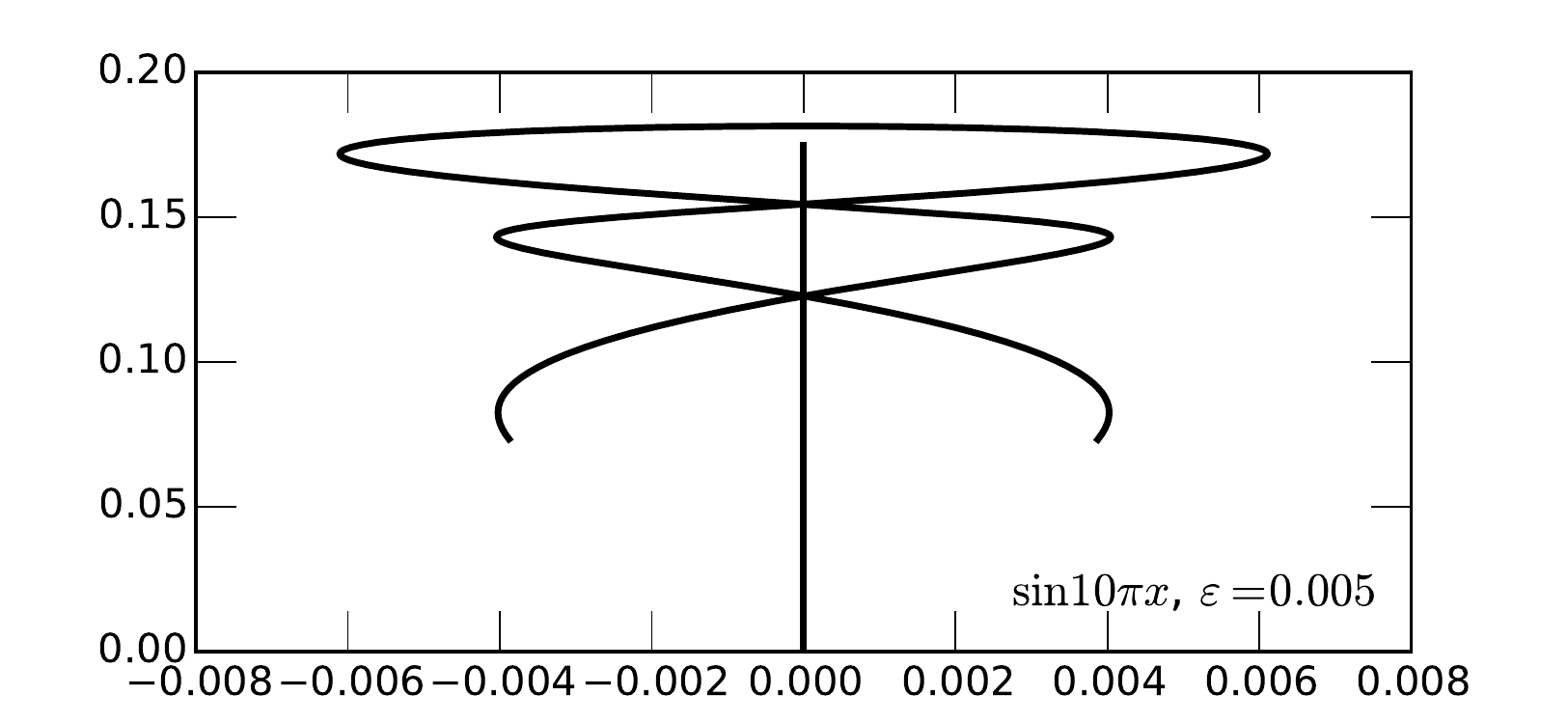}\hfil \includegraphics[width=0.5\textwidth]{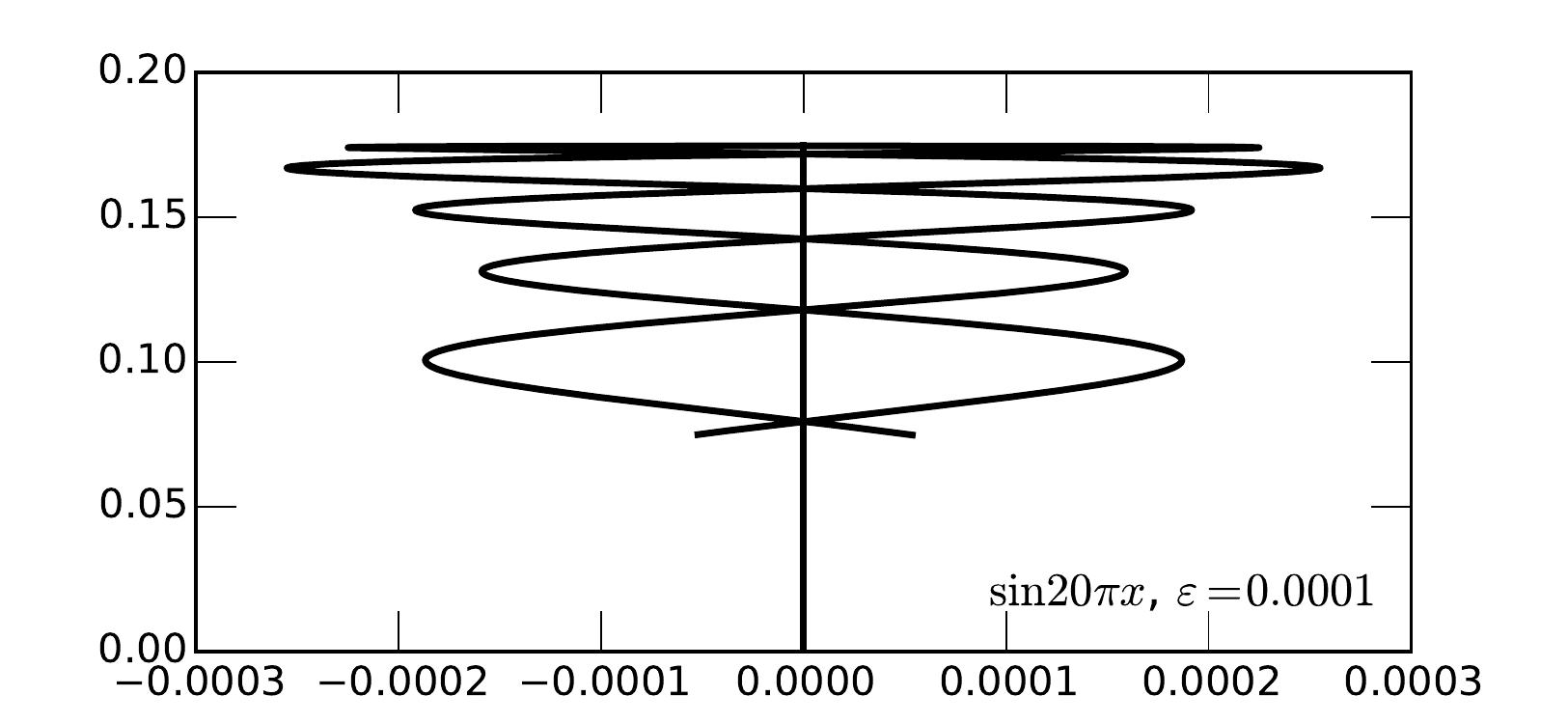}
   \caption{Infinitesimal $0$-bubbles for a perturbation of the Möbius map $f=\frac{z+0.5}{1+0.5z}$ by the map $g=\sin 2\pi nx$,  $n=1,2,3,4,5,10$. The pictures are rescaled horizontally. The vertical segment on each picture is the $0$-bubble for $f$.}\label{fig-bubbles1}
\end{center}
\end{figure}

In the general case, one can try to implement the construction described in \autoref{sec-expl-constr} as a computer program.
The author haven't done this yet.
For perturbations of Möbius maps, a simpler approach is described below.

Take a map $f + \eps g$ where $f$ is as in \autoref{prop-frac}, and $g$ is a trigonometric polynomial. \autoref{fig-bubbles1} shows infinitesimal $0$-bubbles of $f+\eps g$.
\begin{definition*}
 An \emph{infinitesimal 0-bubble} for a perturbation $f+\eps g$ of an analytic circle diffeomorphism $f$ is the image of the segment $I_{0}$ for $f$ under the map
 $\omega \mapsto \overline{\tau}(f+\omega) + \eps \cdot \frac{d}{d\eps}|_{\eps=0}\overline \tau (f+\eps g+ \omega)$, i.e. under the linear approximation   to the complex rotation number.
\end{definition*}
The choice of $\eps$ is shown on each picture in \autoref{fig-bubbles1}, but it does not essentially affect the shape of the infinitesimal bubble.
In the lower part of bubbles,  $\left.\frac{d}{d\eps}\right|_{\eps=0}\overline \tau (f+\eps g+ \omega)$ tends to infinity. So the linear approximation is not accurate, and this part of infinitesimal bubbles  is not shown on the picture.

The following proposition enables us to draw infinitesimal bubbles. Its proof follows the same scheme as the computation in \cite[Section 2.2.3]{Ris}; it is postponed till \autoref{sec-tau'}.
\begin{proposition}
\label{prop-tau'}
Let $f,g$ be as above. Let $\gamma$ be a curve in $\bbC/\bbZ$ which is close to $\bbR/\bbZ$, passes below the attractor and  above the repellor of $f+\omega$, $\omega\in I_0$. Then  
\begin{equation}
\label{eq-tau'}
\left. \frac{d}{d\eps}\right|_{\eps=0}\overline \tau (f+\eps g+ \omega)=\int_{\gamma} \frac{g(z)}{f'(z)} (H_{\omega}'(z))^2 dz
\end{equation}
where $H_{\omega}$ uniformizes $E(f+\omega)$. As in \autoref{prop-frac}, one can compute  $H_{\omega}$ explicitly. The derivatives in the right-hand side are with respect to $z$.
\end{proposition}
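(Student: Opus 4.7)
The plan is to perturb the lifted uniformization $\tilde{H}_\omega\colon\Pi\to\bbC$ along with the family $f+\eps g+\omega$ and to read off $\tau'(0)$ from the solvability obstruction of the resulting cohomological equation, following the scheme of \cite[Section 2.2.3]{Ris}. Because $\omega$ lies in the interior of $I_0$ the diffeomorphism $f+\omega$ is hyperbolic, so \autoref{th-XB_NG} ensures that $\bar\tau(f+\eps g+\omega)$ is analytic in $\eps$ near $0$; the left-hand side of \eqref{eq-tau'} is thus a genuine derivative. The first step is to fix lifts $\tilde{H}_\omega^\eps\colon\Pi\to\bbC$ of the uniformizations of $E(f+\eps g+\omega)$, depending analytically on $\eps$, normalized so that $\tilde{H}_\omega^\eps(z+1)=\tilde{H}_\omega^\eps(z)+1$, and satisfying the twisted equivariance $\tilde{H}_\omega^\eps(f(z)+\eps g(z)+\omega)=\tilde{H}_\omega^\eps(z)+\tau(f+\eps g+\omega)$.

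Writing $\tilde{H}_\omega^\eps=\tilde{H}_\omega+\eps V+O(\eps^2)$ and differentiating the equivariance at $\eps=0$, the identity $\tilde{H}_\omega'(f(z)+\omega)=\tilde{H}_\omega'(z)/f'(z)$ (obtained by differentiating the unperturbed equivariance in $z$) leads to the cocycle equation
\begin{equation*}
V(f(z)+\omega)-V(z)=\tau'(0)-\frac{g(z)\,\tilde{H}_\omega'(z)}{f'(z)}
\end{equation*}
for a holomorphic, $1$-periodic function $V$ on $\Pi$. I would then transport this equation to the uniformized cylinder $\bbC/\bbZ$ by setting $w=H_\omega(z)$, $\tilde V(w):=V(z)$, and $\phi(w):=g(z)\tilde{H}_\omega'(z)/f'(z)$; the equation becomes $\tilde V(w+\tau)-\tilde V(w)=\tau'(0)-\phi(w)$ on an annular neighborhood of $H_\omega(\gamma)\subset\bbC/\bbZ$.

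Expand $\tilde V$ and $\phi$ in Laurent series in $q=e^{2\pi i w}$ on this annulus and compare Fourier modes. For $n\neq 0$ the factor $e^{2\pi i n\tau}-1$ is non-zero (since $\Im\tau>0$), so the corresponding coefficient of $\tilde V$ is uniquely determined; for $n=0$ the equation degenerates to the solvability condition $\tau'(0)=\int_{H_\omega(\gamma)}\phi(w)\,dw$. Pulling back via $w=H_\omega(z)$ and $dw=H_\omega'(z)\,dz$ yields exactly \eqref{eq-tau'}. The assumption that $\gamma$ runs below the attractor and above the repellor guarantees that $\gamma$ and $f(\gamma)+\omega$ bound a fundamental annulus for the action, so that $H_\omega(\gamma)$ is a loop in $\bbC/\bbZ$ homologous to the horizontal generator; Cauchy's theorem then shows the integral is independent of the particular choice of $\gamma$. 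The main obstacle I expect is the rigorous justification of the analytic dependence of $\tilde{H}_\omega^\eps$ on $\eps$ in a fixed strip containing $\gamma$; this should follow from the uniform hyperbolicity of $f+\omega$ on a neighborhood of $\bbR/\bbZ$ together with standard Ahlfors--Bers parameter dependence for the perturbed complex structures on $E(f+\eps g+\omega)$.
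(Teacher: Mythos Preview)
Your argument is correct and is essentially the same as the paper's, just packaged dually. The paper works with the inverse $G_\omega:=\tilde H_\omega^{-1}$ and the conjugacy $f_\omega(G_\omega(z))=G_\omega(z+\tau(\omega))$: differentiating in the parameter, solving for $\tau'_\omega$, and integrating over $G^{-1}(\gamma)$ makes the two terms $G'_\omega(z)/G'(z)$ and $G'_\omega(z+\tau)/G'(z+\tau)$ cancel directly by Cauchy's theorem, after which the substitution $w=G(z)$ gives \eqref{eq-tau'}. Your cocycle equation for $V$ and the Fourier zero-mode obstruction are exactly this cancellation viewed in the uniformized coordinate, so the only difference is that the paper's route avoids transporting to the model torus and the Laurent expansion, making it a line or two shorter; both invoke Ahlfors--Bers for the analytic dependence on the parameter in the same way.
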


For any trigonometric polynomial $g$ (say, $g(x)=\sin 2\pi n x$), the change of variable $w = \pi(z)$ turns the integral \eqref{eq-tau'} into an integral of a rational function along the closed loop $\pi(\gamma)$. We then  compute it explicitly via Residue theorem; for $n\ge 3$, the formulas become cumbersome and we use a computer algebra system GiNaC \cite{GiNaC}, \cite{Vollinga_ginac:symbolic} to obtain them. The infinitesimal bubbles thus obtained are shown in \autoref{fig-bubbles1}.

In certain cases, intersections of infinitesimal $0$-bubbles for $f+\eps g$ mean that for small $\eps$, the $0$-bubbles of $f+\eps g$ intersect as well, see Remark \ref{rem-inf-bub} below.

\begin{question}
    Is it true that the map $\omega\mapsto \tau(f+\omega)$ is injective (so that the  bubbles belong to the boundary of the set $\{\tau(f+\omega), \omega \in \bbH/\bbZ\}$)?
\end{question}
No, see \cite[Corollary 16]{XB_NG}.

\begin{question}
    How large are the bubbles?
\end{question}

In \cite[Main Theorem]{XB_NG} the authors prove that the $p/q$-bubble (with coprime $p,q$) is within a disc of radius  $D_f/ (4\pi q^2)$ tangent to $\bbR/\bbZ$ at $p/q$, where $D_f$ is the distortion of $f$, $D_f = \int_{\bbR/\bbZ} \left|\frac{f''(x)}{f'(x)} \right| dx$.

\begin{question}
\label{q-main}
    Can the bubbles intersect or self-intersect?
\end{question}

Here are several results in this direction.
\begin{proposition}
 If an analytic circle diffeomorphism $f$ is sufficiently close to a rotation in $C^2$ metrics, its different bubbles do not intersect.
\end{proposition}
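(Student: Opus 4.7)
The plan is to combine the bubble-localization estimate of Buff--Goncharuk cited just before the proposition with an elementary disc-packing inequality, turning the qualitative $C^2$-closeness hypothesis into the sharp quantitative condition $D_f<2\pi$. As $f$ approaches a rotation in the $C^2$-topology, one has $f'\to 1$ and $f''\to 0$ uniformly, so the distortion $D_f=\int_{\bbR/\bbZ}|f''/f'|\,dx$ tends to zero. It therefore suffices to prove the quantitative statement that whenever $D_f<2\pi$, the bubbles of $f$ are pairwise disjoint.

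Fix two distinct coprime pairs $(p_1,q_1)\ne(p_2,q_2)$. By the cited estimate the $p_i/q_i$-bubble lies inside a closed disc $D_i\subset\overline{\bbH/\bbZ}$ tangent to $\bbR/\bbZ$ at $p_i/q_i$ with radius $r_i=D_f/(4\pi q_i^2)$. These discs touch $\bbR/\bbZ$ only at their respective (distinct) tangency points, so it is enough to show that the open interiors of $D_1$ and $D_2$ are disjoint in $\bbH/\bbZ$. The elementary geometric fact I would invoke is that two closed discs tangent to a common line at points distance $d$ apart, with radii $r_1,r_2$, have disjoint interiors iff $d^2\ge 4r_1r_2$: their centers lie at height $r_i$, so the distance between them is $\sqrt{d^2+(r_1-r_2)^2}$, and this must exceed $r_1+r_2$.

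Applying the inequality with $d$ equal to the circular distance $d(p_1/q_1,p_2/q_2)$ and using that both $p_1q_2-p_2q_1$ and $q_1q_2-(p_1q_2-p_2q_1)$ are nonzero integers (so $d\ge 1/(q_1q_2)$), the condition for disjointness becomes $1/(q_1q_2)^2\ge D_f^2/(4\pi^2 q_1^2 q_2^2)$, i.e.\ $D_f\le 2\pi$; a strict bound $D_f<2\pi$ yields strict disjointness, uniformly in $(p_i,q_i)$.

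The one delicate point in executing this plan is the passage from the universal cover $\bbH$ to the cylinder $\bbH/\bbZ$: one must verify that the circular (rather than Euclidean) distance is the correct $d$, and that each $D_{p/q}$ embeds under the covering projection $\bbH\to\bbH/\bbZ$. Both reductions are immediate from the crude bound $r_{p/q}\le D_f/(4\pi)<1/2$ valid under $D_f<2\pi$, so no serious obstacle is expected; the entire argument is elementary once the Buff--Goncharuk estimate is taken as input.
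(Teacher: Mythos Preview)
Your proposal is correct and follows essentially the same route as the paper: reduce $C^2$-closeness to the quantitative hypothesis $D_f<2\pi$, invoke the Buff--Goncharuk localization to place the $p/q$-bubble in a disc of radius $D_f/(4\pi q^2)<1/(2q^2)$ tangent to $\bbR/\bbZ$ at $p/q$, and then observe that such discs are pairwise disjoint (the Ford-circle packing). The paper compresses the last step into ``it is easy to verify that these discs do not intersect,'' whereas you spell out the criterion $d^2\ge 4r_1r_2$ and the bound $d\ge 1/(q_1q_2)$ on $\bbR/\bbZ$; these are exactly the ingredients the paper leaves implicit.
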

\begin{proof}
Suppose that the distortion of $f$ satisfies $D_f<2\pi$, which holds true if $f $ is $C^2$-close to a rotation. For each $p/q$, take the  disc of radius  $D_f/ (4\pi q^2) < \frac{1}{2q^2}$ tangent to $\bbR/\bbZ$ at $p/q$. It is easy to verify that these discs do not intersect for different $p/q$. The bubbles are within such discs, so they do not intersect as well.
\end{proof}

This proposition does not imply that the bubbles of $f$ are not self-intersecting.  This article contains an affirmative answer to \autoref{q-main}:

\begin{maintheorem}
\label{thm-main}
\begin{enumerate}
\item  There exists a circle diffeomorphism $f$ such that its $0$-bubble is self-intersecting. 
\item  For each rational $\frac pq$, there exists a circle diffeomorphism $f$ such that its $0$-bubble intersects its $\frac pq$-bubble.
\end{enumerate}
\end{maintheorem}

We do not assert  that these bubbles intersect transversely;  it is possible that they are tangent at a common point.

\begin{remark}
\label{rem-inf-bub}
Let $f=\frac{z+0.5}{1+0.5z}$ be the Möbius map that we chose to draw infinitesimal $0$-bubbles. Let $g = \sin 2\pi n x$, $n = 3,4,5,$ or $10$.
 Using the self-intersections of infinitesimal $0$-bubbles for $f+\eps g$, see \autoref{fig-bubbles1}, one may show that for sufficiently small $\eps$, the $0$-bubble of $f+\eps g$ is self-intersecting. This provides an alternative proof of the first part of Main Theorem. Here we sketch this proof.

Let $l_1(\eps)$ and $l_2(\eps)$ be two small intersecting arcs of the infinitesimal $0$-bubble for $f+\eps g$. Let $a_\eps, b_\eps$ and $c_\eps, d_\eps$ be the endpoints of $l_1(\eps), l_2(\eps)$ respectively. It is easy to verify that the lengths of the sides and the diagonals of the quadrilateral $a_\eps c_\eps b_\eps d_\eps$ are of order $\eps$, and $l_1(\eps)$, $l_2(\eps)$ are close to these diagonals. The $0$-bubble of $f+\eps g$ is $o(\eps)$-close to the infinitesimal $0$-bubble for $f+\eps g$, thus it contains a pair of curves that are $o(\eps)$-close to $l_1(\eps), l_2(\eps)$. This implies that the $0$-bubble of $f+\eps g$ is self-intersecting for small $\eps$.

\end{remark}

\section{Main Lemmas}
Part 1 of \nameref{thm-main} is based on \autoref{lem-invar} and the following lemma.
\begin{lemma}
\label{lem-oneFamily}
 For any hyperbolic analytic circle diffeomorphism $f_1$ with $\rot f_1=0$ and any analytic circle diffeomorphism $f_2\neq \id$, there exists an analytic diffeomorphism $f$ and $\omega \in \bbR/\bbZ \,  \setminus \, \{0\}$, such that $f$ and $f+\omega$ are analytically conjugate to $f_1,f_2$ respectively.
\end{lemma}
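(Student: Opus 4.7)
I would approach the problem by reversing it: instead of constructing $f$ directly, I seek an analytic circle diffeomorphism $g$ analytically conjugate to $f_2$ together with a nonzero constant $\omega$ such that $g-\omega$ is analytically conjugate to $f_1$; then setting $f := g-\omega$ gives the required pair, since $f \sim f_1$ and $f+\omega = g \sim f_2$. Write $g = \psi f_2 \psi^{-1}$ for an analytic circle diffeomorphism $\psi$; this gives infinitely many degrees of freedom in addition to the one-dimensional parameter $\omega$.

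The first step is to fix the rotation number. The map $\omega \mapsto \rot(f_2-\omega)$ is continuous and non-increasing on $\bbR/\bbZ$, so $J := \{\omega : \rot(f_2-\omega)=0\}$ is a closed interval. Provided $f_2$ is not a rotation, the interior of $J$ is nonempty, and for $\omega$ in this interior $f_2 - \omega$ is hyperbolic with rotation number $0$. I pick such an $\omega$ distinct from $0$; if $0$ happens to lie in $J^\circ$ there are still infinitely many other choices available. The exceptional case in which $f_2$ is an irrational rotation is handled separately via Herman--Yoccoz (\autoref{th-HermYoc}): every analytic diffeomorphism with the same Diophantine rotation number as $f_2$ is analytically conjugate to it, so in that case it is enough to take $f=f_1$ and $\omega\ne 0$ with $\rot(f_1+\omega) = \rot f_2$.

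The second step is to match the analytic conjugacy class. A hyperbolic zero-rotation-number analytic circle diffeomorphism is determined up to analytic conjugacy by the cyclic arrangement of its fixed points together with the multipliers at each, which is a finite-dimensional datum. After Step~1 there are thus only finitely many invariants to match between $\psi f_2 \psi^{-1}-\omega$ and $f_1$. I would accomplish this by perturbing $\psi$ away from the identity and computing how the fixed-point multipliers of $\psi f_2 \psi^{-1}-\omega$ vary to first order in the perturbation; an implicit function theorem argument then produces a $\psi$ for which the multipliers, and hence the full conjugacy class, coincide with those of $f_1$.

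The main obstacle will be the transversality computation underlying the last step: one has to verify that the derivative of the multiplier map in the direction of analytic perturbations of $\psi$ supported near the fixed points of $f_2 - \omega$ surjects onto the finite-dimensional multiplier space. Given the richness of analytic vector fields on $\bbR/\bbZ$ one expects this, but identifying a convenient finite-dimensional slice of perturbations and checking non-degeneracy is delicate. Corner cases — $f_2$ having parabolic orbits, or a cyclic fixed-point arrangement not matching that of $f_1$ — would need additional arguments: one must first perturb $\omega$ inside $J^\circ$ to gain hyperbolicity, then use extra fixed-point--creating perturbations of $\psi$ to adjust the combinatorial type before addressing the multipliers.
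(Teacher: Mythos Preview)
Your Step~2 rests on a false premise: a hyperbolic analytic circle diffeomorphism with $\rot=0$ is \emph{not} determined up to analytic conjugacy by the cyclic order of its fixed points and their multipliers. The complete analytic moduli also include the transition maps $\psi_{j;j+1}=\psi_{j+1}^{-1}\circ\psi_j$ between linearizing charts of consecutive fixed points (see \autoref{th-moduli-classif} and the remark following it); each such map is an analytic diffeomorphism $\bbR^+\to\bbR^-$ intertwining two dilations, so the moduli space is infinite-dimensional. Consequently, matching the multipliers of $\psi f_2\psi^{-1}-\omega$ to those of $f_1$ does not yield analytic conjugacy, and the proposed implicit function theorem argument, which targets a finite-dimensional multiplier map, cannot close the gap. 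There is no evident way to salvage the scheme: one would need to show that $\{\psi f_2\psi^{-1}-\omega : \psi,\omega\}$ meets every analytic conjugacy class of hyperbolic diffeomorphisms, which is a statement of a completely different nature.

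The paper bypasses moduli entirely. It observes that if $\rot(f_1 f_2^{-1})=\omega$ is Diophantine, then by Herman--Yoccoz (\autoref{th-HermYoc}) $f_1 f_2^{-1}$ is analytically conjugate to the rotation $z\mapsto z+\omega$; in that chart, $\tilde f_1=\tilde f_2+\omega$, so $f:=\tilde f_2$ works (\autoref{prop-Dioph-dif}). It remains to arrange the Diophantine condition, which is done by replacing $f_1$ with a suitable conjugate $\hat f_1=h f_1 h^{-1}$: one checks by hand that $\rot(\hat f_1 f_2^{-1})$ attains both $0$ and $1/2$ as $h$ varies, hence by connectedness takes a Diophantine value (\autoref{prop-conjugate}). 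The key idea you are missing is to apply Herman--Yoccoz not to $f_2$ but to the composition $f_1 f_2^{-1}$.
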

This lemma provides a non-restrictive sufficient condition for two analytic diffeomorphisms to appear (up to analytic conjugacies) in one and the same family of the form $f+\omega$.

Part 2 of \nameref{thm-main} also requires the following lemma, which is  interesting in its own right.
\begin{lemma}
\label{lem-realiz}
For any complex number $w\in \bbH/\bbZ$ and any natural number $m$, there exists a hyperbolic circle diffeomorphism $f$ having $2m$ fixed points and the complex rotation number $\bar \tau(f)=w$.
 \end{lemma}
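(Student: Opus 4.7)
The plan is to prove, for each $m \ge 1$, that the set
\[
W_m := \{\bar\tau(f) \mid f \text{ a hyperbolic analytic circle diffeomorphism with } 2m \text{ fixed points}\}
\]
equals $\bbH/\bbZ$. Since $\bbH/\bbZ$ is connected, it suffices to verify $W_m$ is nonempty, open, and closed. Nonemptiness is immediate: for sufficiently small $\alpha > 0$, the map $f(x) = x + \alpha \sin(2\pi m x)$ is hyperbolic with exactly $2m$ fixed points at $k/(2m)$, $k = 0,\dots,2m-1$, so $\bar\tau(f) \in W_m$.

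For openness at any $f_0$ realizing a point of $W_m$, consider the real two-parameter family $f_0 + \omega + \eps g$ with $\omega,\eps\in\bbR$ and $g$ a real trigonometric polynomial. The $\omega$-derivative $\partial_\omega \bar\tau|_{(0,0)}$ is the tangent to the $0$-bubble of $f_0$ at $\bar\tau(f_0)$, while by \autoref{prop-tau'} the $\eps$-derivative equals $\int_\gamma (g/f_0')(H_0')^2\, dz$. Since $(H_0')^2/f_0'$ is a nonvanishing holomorphic function on a neighborhood of $\bbR/\bbZ$, the $\bbR$-linear functional $g \mapsto \int_\gamma (g/f_0')(H_0')^2\, dz$ has image at least $1$-dimensional in $\bbC$, and combined with the $\omega$-direction (or with a second test polynomial $g_2$) one can choose $g$ so that the Jacobian of $(\omega, \eps) \mapsto \bar\tau$ is nondegenerate. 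The implicit function theorem then supplies an open neighborhood of $\bar\tau(f_0)$ in $W_m$.

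For closedness, take any smooth path $\gamma: [0,1] \to \bbH/\bbZ$ with $\gamma(0) \in W_m$, and lift it, using the openness step, to a continuous path $\tilde\gamma$ of hyperbolic diffeomorphisms with $2m$ fixed points, defined on some maximal half-open interval $[0, t^*)$. The only obstructions to extension are: (i) a fixed point of $\tilde\gamma(t)$ becoming parabolic as $t \nearrow t^*$, which would force $\bar\tau(\tilde\gamma(t)) \to \bbR/\bbZ$ and contradict $\gamma(t) \in \bbH/\bbZ$; and (ii) the $\tilde\gamma(t)$ escaping to infinity in the function space. To rule out (ii), I normalize $\tilde\gamma(t)$ by analytic conjugation — which preserves $\bar\tau$ by Lemma \ref{lem-invar} — so that two of the fixed points are pinned at $0$ and $1/(2m)$. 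Since $\gamma(t)$ stays uniformly bounded away from $\bbR/\bbZ$, the multipliers at all $2m$ fixed points stay uniformly bounded away from $1$ (and from $0$ and $\infty$), and Montel's theorem on a fixed complex neighborhood of $\bbR/\bbZ$ extracts a convergent subsequence whose limit lies in our class.

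The main obstacle is this last compactness step: establishing a common complex neighborhood of $\bbR/\bbZ$ to which all normalized $\tilde\gamma(t)$ extend holomorphically with a uniform bound, so that Montel's theorem applies. I expect this to follow from the multiplier bounds together with standard distortion estimates for analytic circle diffeomorphisms, but it is the most delicate part of the argument and probably the place where one would have to work hardest in a detailed proof.
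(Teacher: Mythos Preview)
Your approach --- show $W_m$ is nonempty, open, and closed in $\bbH/\bbZ$ --- is quite different from the paper's. The paper is entirely constructive: it starts from the target torus $E_w = \bbC/(\bbZ + w\bbZ)$, cuts it by $2m$ disjoint real-analytic loops homotopic to the second generator into annuli $\mathcal A_j$, uniformizes each $\mathcal A_j$ by a half-annulus $\bbH^{\pm}/(z\sim\lambda_j z)$ to read off multipliers $\lambda_j$ and transition maps $\psi_{j;j+1}$, and then invokes a realization result for the analytic-conjugacy moduli of hyperbolic circle maps (\autoref{th-moduli-classif}) to manufacture an $f$ with exactly those multipliers and transition maps. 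By \autoref{rem-Ef-glue} and \autoref{th-Buff-constr} one gets $\mathcal E(f) \cong E_w$ with the correct marking, hence $\bar\tau(f) = w$. No limits, no compactness.

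The gap you flag in the closedness step is real and, I believe, not fixable along the lines you sketch. A bound on $\bar\tau$ away from $\bbR/\bbZ$ does bound the moduli $\pi/|\log\lambda_j|$ of the embedded annuli $\tilde A_j$ from above (via the embedded-annulus bound in a torus of bounded modulus), hence keeps each $\lambda_j$ away from $1$; but it gives no \emph{lower} bound on those moduli, so nothing prevents some $\lambda_j$ from drifting to $0$ or $\infty$. More seriously, even full control of all multipliers cannot yield a common complex strip of holomorphy: multipliers are conjugacy invariants, so conjugating a fixed hyperbolic $f$ by analytic diffeomorphisms $h_n$ whose domains of analyticity shrink to $\bbR/\bbZ$ produces a sequence with identical multipliers and no common strip on which Montel could be applied. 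Thus $\tilde\gamma(t)$ can genuinely escape the space of analytic diffeomorphisms even after your normalization. The paper's direct construction sidesteps this issue completely.

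Two smaller points on openness. First, \autoref{prop-tau'} as stated applies only to M\"obius $f$; for a general hyperbolic $f_0$ you need the appendix version (\autoref{prop-deriv}), where the uniformizer is $\tilde H_\omega$ for $\mathcal E(f_\omega)$, not $H_\omega$ for $E(f+\omega)$. Second, you still need the real-linear map $(\omega,\eps)\mapsto \bar\tau$ to have rank~$2$ at \emph{every} $f_0$, not just generically; you assert one can always choose $g$ to make the Jacobian nondegenerate, but this requires an argument.
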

 
\autoref{lem-invar} shows that complex rotation numbers can be used as invariants of analytic classification of families of circle diffeomorphisms; \autoref{lem-realiz} is a weak version of the realization of these invariants.
The following realization question is open: 

\vskip 0.1 cm

\begin{question}
 Which holomorphic self-maps of the upper half-plane are realized as $\omega \mapsto \tau(f+\omega)$ for some circle diffeomorphism $f$?
\end{question}

\section{Proof of the \nameref{thm-main} modulo Lemmas \ref{lem-oneFamily} and \ref{lem-realiz}}

\subsection{Part 1: Self-intersecting $0$-bubble}

This part of \nameref{thm-main} does not require \autoref{lem-realiz}. 

Fix a hyperbolic circle diffeomorphism $f_1$ with $\rot f_1=0$. Apply \autoref{lem-oneFamily} to $f_1$ and $f_2=f_1$. 

We get a circle diffeomorphism $f$ such that  $f, f+\omega$ with $\omega\neq 0 \mod 1$ are both analytically conjugate to $f_1$. Due to \autoref{lem-invar}, $\bar \tau(f) =\bar \tau(f_1)= \bar \tau(f+\omega)$. Note that $\bar \tau(f), \bar \tau(f+\omega)$ belong to the $0$-bubble for $f$ because $f, f+\omega$ have zero rotation number and are hyperbolic.

So the $0$-bubble for $f$ passes twice through the point $\bar \tau(f_{1})$. This completes the proof of \nameref{thm-main} (Part 1).
\begin{remark*}
 Using \autoref{lem-realiz}, one can also prove that the $0$-bubble may self-intersect at any prescribed point $w\in \bbH/\bbZ$. To achieve this, it is sufficient to start with $f_1$ provided by  \autoref{lem-realiz} such that $\bar \tau(f_1) =w$.
\end{remark*}

\subsection{Part 2: Intersection of $0$-bubble and \texorpdfstring{$\frac pq$}{(p/q)}-bubble}

Take a hyperbolic circle diffeomorphism $f_2$ with  $\rot f_2 = \frac pq$. Put $w: = \bar \tau(f_2)$.
Using \autoref{lem-realiz}, construct a hyperbolic circle diffeomorphism $f_1$ with zero rotation number such that $\bar \tau(f_1) = w$.

Now, two circle diffeomorphisms $f_1,f_2$ satisfy $\rot f_1 = 0, \rot f_2=\frac pq$ and $\bar \tau(f_1)=\bar \tau(f_2)$.

 \autoref{lem-oneFamily}   provides us with a circle diffeomorphism $f$ such that $f, f+\omega$ are conjugate to $f_1,f_2$. 
Due to \autoref{lem-invar}, 
$\bar \tau(f) = \bar \tau(f_1) =w$ and $\bar \tau(f+\omega) = \bar \tau(f_2) = w$. The point $w$ belongs to the $0$-bubble of $f$, because $\rot f = \rot f_1=0$ and $f$ is hyperbolic, and it also belongs to the $\frac pq$-bubble, because $\rot (f+\omega) =\rot (f_2) = \frac pq$ and $f+\omega$ is hyperbolic. Finally, the $0$-bubble and the $\frac pq$-bubble for $f$ intersect at $w$. This completes the proof of \nameref{thm-main} (Part 2).

\begin{remark*}
 In a similar way one can prove that the $0$-bubble and the $\frac pq$-bubble may intersect at any prescribed point $w\in \bbC/\bbZ$. This requires an analogue of \autoref{lem-realiz} for circle diffeomorphisms with non-zero rational rotation numbers; the proof of this analogue repeats the proof of \autoref{lem-realiz}, except  for some technical details. 
\end{remark*}

\section{Proof of \autoref{lem-oneFamily}}

We say that two circle diffeomorphisms $f_1,f_2$ have a \emph{Diophantine quotient} if  $\rot (f_1 f_2^{-1}) = : \omega$ is Diophantine. \autoref{lem-oneFamily} follows from two propositions below. 
\begin{proposition}
\label{prop-Dioph-dif}
 If two analytic circle diffeomorphisms $f_1,f_2$ have a Diophantine quotient and $\rot (f_1 f_2^{-1}) = : \omega$, then there exists an analytic diffeomorphism $f$ such that $f$ and $f+\omega$ are analytically conjugate to $f_1,f_2$ respectively.
\end{proposition}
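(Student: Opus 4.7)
The plan is to apply the Herman--Yoccoz theorem (\autoref{th-HermYoc}) to the quotient diffeomorphism $g := f_1 \circ f_2^{-1}$ and use the resulting linearization to present $f_1$ and $f_2$ as shifts of one another in suitable coordinates.

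Concretely, since $\rot g = \omega$ is Diophantine by hypothesis, \autoref{th-HermYoc} produces an analytic circle diffeomorphism $\phi$ with $\phi^{-1}\circ g\circ \phi = R_\omega$, where $R_\omega(x) = x + \omega$; analyticity of $g$ guarantees that $\phi$ extends holomorphically to a complex strip around $\bbR/\bbZ$. I then set $\tilde f_i := \phi^{-1}\circ f_i\circ \phi$, so each $\tilde f_i$ is analytically conjugate to $f_i$ via $\phi$. The key identity
\begin{equation*}
\tilde f_1 \circ \tilde f_2^{-1} \;=\; \phi^{-1}\circ (f_1 f_2^{-1})\circ \phi \;=\; R_\omega
\end{equation*}
is equivalent, after substituting $x = \tilde f_2(y)$, to the pointwise equality $\tilde f_1(y) = \tilde f_2(y) + \omega$. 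Hence the analytic circle diffeomorphism $f := \tilde f_1$ is analytically conjugate to $f_1$ via $\phi$, while $f - \omega = \tilde f_2$ is analytically conjugate to $f_2$ via $\phi$; this realizes $f_1, f_2$ inside the single one-parameter family $\{f+s\}$ at heights separated by $\omega$. The sign of $\omega$ is immaterial for the intended applications and can be flipped, if needed, by interchanging $f_1, f_2$ or by linearizing $f_2 f_1^{-1}$ instead (whose rotation number is $-\omega$, still Diophantine).

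The only non-formal input is \autoref{th-HermYoc}; the remaining steps are purely algebraic composition-chasing. Correspondingly, the single potential obstacle is ensuring that the Herman--Yoccoz conjugacy is analytic with a holomorphic extension to a complex strip, rather than merely smooth on the circle -- but this is precisely the content of \autoref{th-HermYoc} as cited, so invoking it closes the argument.
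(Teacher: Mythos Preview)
Your proposal is correct and follows essentially the same route as the paper: apply Herman--Yoccoz to linearize $f_1 f_2^{-1}$, conjugate $f_1,f_2$ into that chart, and read off $\tilde f_1 = \tilde f_2 + \omega$. The only cosmetic difference is which of $\tilde f_1,\tilde f_2$ is called $f$ (the paper sets $f=\tilde f_2$), and your explicit remark on the sign of $\omega$ is in fact more careful than the paper's own proof on this point.
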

\begin{proposition}
\label{prop-conjugate}
 Any hyperbolic analytic circle diffeomorphism $f_1$ with $\rot f_1=0$ is analytically conjugate to a diffeomorphism that has a Diophantine quotient with a given analytic circle diffeomorphism $f_2$,  $f_2\neq \id$.
\end{proposition}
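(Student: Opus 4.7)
The plan is to produce the required conjugate within a continuous family of analytic conjugates of $f_1$, and then to use the fact that Diophantine numbers have full Lebesgue measure in $\bbR$. Concretely, for $t \in \bbR/\bbZ$, let $R_t(x) = x + t$ and set
$$f_{1,t} := R_{-t} \circ f_1 \circ R_t,$$
so that each $f_{1,t}$ is analytically conjugate to $f_1$ via the analytic diffeomorphism $R_t$. Define
$$\rho \colon \bbR/\bbZ \to \bbR/\bbZ, \qquad \rho(t) := \rot(f_{1,t} \circ f_2^{-1}).$$
Continuity of the rotation number in the $C^0$-topology makes $\rho$ continuous. Granted that $\rho$ is non-constant, its image is a connected set with more than one point, hence an arc of positive Lebesgue measure, which must contain a Diophantine number $\omega$; choosing $t^*$ with $\rho(t^*) = \omega$ and setting $f := f_{1,t^*}$ gives an analytic conjugate of $f_1$ with $\rot(f \circ f_2^{-1}) = \omega$ Diophantine, which is precisely the required Diophantine quotient with $f_2$.

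The main obstacle is showing that $\rho$ is not locked at a single value. The intuition is that the $2n \ge 2$ hyperbolic fixed points of $f_1$ are shifted rigidly by $-t$ as $t$ varies over $\bbR/\bbZ$, while the hypothesis $f_2 \neq \id$ provides a point $x_0$ with $f_2(x_0) \neq x_0$. To exploit this, I would attempt the following: select $t_1$ so that an attracting fixed point of $f_{1,t_1}$ coincides with a fixed point of $f_2$ (if $f_2$ has fixed points), which forces $\rho(t_1) = 0$; then select $t_2$ so that no such coincidence is possible and analyze the local behaviour of $f_{1,t_2} \circ f_2^{-1}$ near the translated fixed points of $f_{1,t_2}$ to show $\rho(t_2) \neq 0$. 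In the complementary case where $f_2$ has no fixed points, one proceeds analogously using the minimality of $f_2$. Should the one-parameter rotational family prove too rigid (e.g.\ if a pathological symmetry makes $\rho$ locally constant at a rational plateau), the same scheme will work after enlarging to a multi-parameter family of analytic conjugating diffeomorphisms, say $h_{s,t} := R_{-t} \circ \phi_s \circ R_t$ for a suitable nontrivial analytic family $\phi_s$ of circle diffeomorphisms; the enlargement only makes non-constancy easier to verify.

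Translating this ``one can move things around freely'' intuition into a rigorous analytic argument that rules out all constant scenarios for $\rho$ is the technical heart of the proof. Once non-constancy is in hand, continuity of $\rho$ and the full measure of Diophantine numbers complete the argument exactly as described above, and the proposition follows.
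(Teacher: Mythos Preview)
Your overall strategy---vary $f_1$ within its analytic conjugacy class, use continuity of $\hat f_1\mapsto\rot(\hat f_1 f_2^{-1})$, and then the intermediate value theorem plus density of Diophantine numbers---is exactly the paper's. But the proof is incomplete at the point you yourself flag: you never establish that the rotation-number function is non-constant, and your restricted one-parameter family of conjugacies can genuinely fail. If $f_2=R_s$ is a nontrivial rotation, then
\[
f_{1,t}\circ f_2^{-1}=R_{-t}\circ f_1\circ R_{t}\circ R_{-s}
\]
is conjugate (via $R_t$) to $f_1\circ R_{-s}$, so $\rho(t)=\rot(f_1\circ R_{-s})$ is constant in $t$. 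Thus conjugating only by rotations is not enough, and your sketched case analysis (``align an attractor with a fixed point of $f_2$ to force $\rho=0$, then move to force $\rho\neq 0$'') is neither general nor carried out; even when the fixed-point sets are disjoint, $f_{1,t_2}\circ f_2^{-1}$ may still have a fixed point elsewhere.

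The paper fills exactly this gap by working with the full conjugacy class $\mathcal A=\{h\circ f_1\circ h^{-1}\}$ (which is path-connected) and exhibiting two explicit conjugates achieving $\rot(\hat f_1 f_2^{-1})=0$ and $\rot(\hat f_1 f_2^{-1})=\tfrac12$. For $0$ one chooses $h$ so that $\hat f_1(a)=f_2(a)$ at some point $a$, producing a fixed point of $\hat f_1 f_2^{-1}$. For $\tfrac12$ one picks four points $c,f_1(c),f_1(d),d$ near an attracting fixed point of $f_1$ and four points $f_2^{-1}(b),a,b,f_2^{-1}(a)$ near a non-fixed point of $f_2$ (using only $f_2\neq\id$), and sends the first quadruple to the second by $h$; this forces a period-$2$ orbit of $\hat f_1 f_2^{-1}$. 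With the two values $0$ and $\tfrac12$ in hand, the rest of your argument goes through verbatim. So the missing idea is: enlarge to all analytic conjugating maps and \emph{construct} the two specific conjugates, rather than trying to argue non-constancy abstractly along the rotation family.
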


\begin{proof}[Proof of \autoref{prop-Dioph-dif} ]
 Due to Herman -- Yoccoz Theorem (see \autoref{th-HermYoc}), in some analytic chart,  $f_1 f_2^{-1}$ is the rotation  by $\omega= \rot f_1f_2^{-1}$. 
Let $\tilde f_{1}, \tilde f_2$ be the diffeomorphisms $f_1,f_2$ in this analytic chart; then $\tilde f_1 \tilde f_2 ^{-1} (z) =z+\omega$. So 
$\tilde f_1  (z) =\tilde f_2(z) +\omega$, and we can take $f=\tilde f_2$. 
\end{proof}

\begin{proof}[Proof of \autoref{prop-conjugate}]
 
 Let $\mathcal A $ be the set of analytic diffeomorphisms of the form $\hat f_1 = h\circ f_1\circ h^{-1}$ for all possible analytic orientation-preserving diffeomorphisms~$h$. Then $\mathcal A$ is a linearly connected subset of the space of all analytic circle diffeomorphisms, because for each $h_1,h_2$, we can join $h_1$ to $h_2$ by a continuous family of analytic circle diffeomorphisms $h^t$. Now if we show that the continuous function $\hat f_1 \mapsto \rot (\hat f_1f_2^{-1})$ on $\mathcal A$  takes two distinct values, then  it takes all intermediate values, including Diophantine values. 
 
Let us  find two maps of the form $\hat f_1 = h\circ f_1\circ h^{-1}$ such that $\rot (\hat f_1f_2^{-1})$ attains  values $0$ and $1/2$: 
\begin{description}
  \item[$\mathbf{\rot (\hat f_1 f_2^{-1})=0}$] 
    Choose $h$ such that  for some point $a\in \bbR/\bbZ$, $\hat f_1(a) =f_2(a)$.
    This is possible, because $f_1\neq\id$ and $f_2\neq\id$.
    Then  $\hat f_1 f_2^{-1}(f_2(a))  =f_2(a)$, so $f_2(a)$ is a fixed point for $\hat f_1 f_2^{-1}$, and ${\rot (\hat f_1 f_2^{-1})=0}$.

  \item[$\mathbf{\rot (\hat f_1 f_2^{-1})=\frac 12}$]
    Choose two points $a,b\in \bbR/\bbZ$ such that these points and their preimages under $f_2$ are distinct and are ordered in the following way along the circle: $a,b, f_2^{-1}(a), f_2^{-1}(b)$. It is sufficient to  take $a$ not fixed and $b$ close to $a$.

    \begin{figure}[h]
      \centering
      \includegraphics[width=0.7\textwidth]{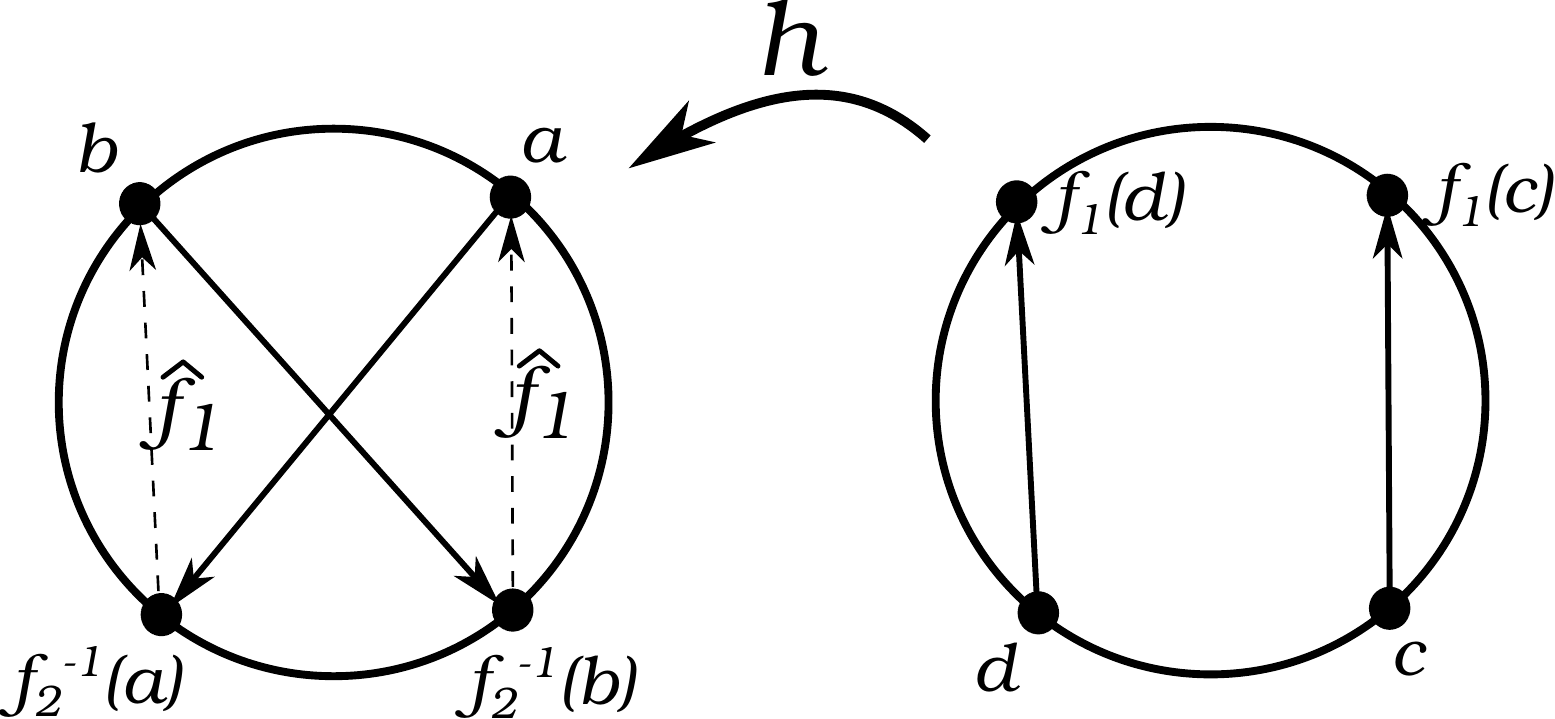}
      \caption{The choice of $h$ that yields  $\rot (\hat f_1 f_2^{-1})=\frac 12$}\label{fig-f1f2}
    \end{figure}

    Choose two points $c,d\in \bbR/\bbZ$ such that these points and their images under $f_1$ are distinct and  are ordered in the following way along the circle: $c,f_1(c), f_1(d), d$.  It is sufficient to  take $c$ and  $d$ near an attracting fixed point of $f_1$, on the different sides with respect to it.  

    Choose~$h$ that takes four points  $c,f_1(c), f_1(d), d$   to four points $f_2^{-1}(b),a,b, f_2^{-1}(a) $ (see \autoref{fig-f1f2}). Then $\hat f_1= h\circ f_1\circ h^{-1}$ satisfies $\hat f_1(f_2^{-1}(b))=a$, $\hat f_1(f_2^{-1}(a)) = b$, hence the point $a$ has period $2$ under $\hat f_1  f_2^{-1}$. So $\rot (\hat f_1 f_2^{-1})=\frac 12$. 
\end{description}

Finally, for some $h$, the maps $\hat f_1= h\circ f_1\circ h^{-1}$ and $f_2$ have a Diophantine quotient. 
\end{proof}
These two propositions imply \autoref{lem-oneFamily}.

The rest of the article is devoted to the proof of \autoref{lem-realiz}.

\section{Explicit construction of bubbles}
\label{sec-expl-constr}
\autoref{th-XB_NG} defines $\overline \tau(f+\omega)$, $\omega\in \bbR/\bbZ$, as a limit value of the map $\omega\to \tau(f+\omega)$ on the real axis. In this section, we describe $\overline \tau(f+\omega), \omega \in I_{0}$, as a modulus of an explicitly constructed complex torus $\mathcal E(f+\omega)$. 

This construction was proposed by X. Buff; see \cite{NG2012-en, XB_NG} for more details. The key idea of this construction is contained in \cite{Ris}, but there it was used in different circumstances.

\subsection{The complex torus \texorpdfstring{$\mathcal E(f)$}{ℰ(f)}}

Let $f$ be a hyperbolic diffeomorphism. Assume that  $\rot f=0$. 

Let $a_j, 1\le j\le 2m$, be its fixed points with multipliers $\lambda_j$. We suppose that $0<\lambda_{2j-1}<1<\lambda_{2j}$, i.e.\ even indices correspond to repellors, and odd indices correspond to attractors. Let $\psi_j \colon (\bbC, 0) \to (\bbC/\bbZ,a_j)$ be the corresponding linearization charts, i.e.\ $\psi^{-1}_j\circ f\circ \psi_j (z)=\lambda_j z$, $\psi_j(0)=a_j$,  $\psi_j(\bbR)\subset \bbR/\bbZ$, and $\psi_j$  preserve orientation on $\bbR$. We extend these charts by iterates of $f$ so that the image of $\psi_j$ contains $(a_{j-1}, a_{j+1})$.

\begin{figure}[h]
\begin{center}
\includegraphics[width=0.7\textwidth]{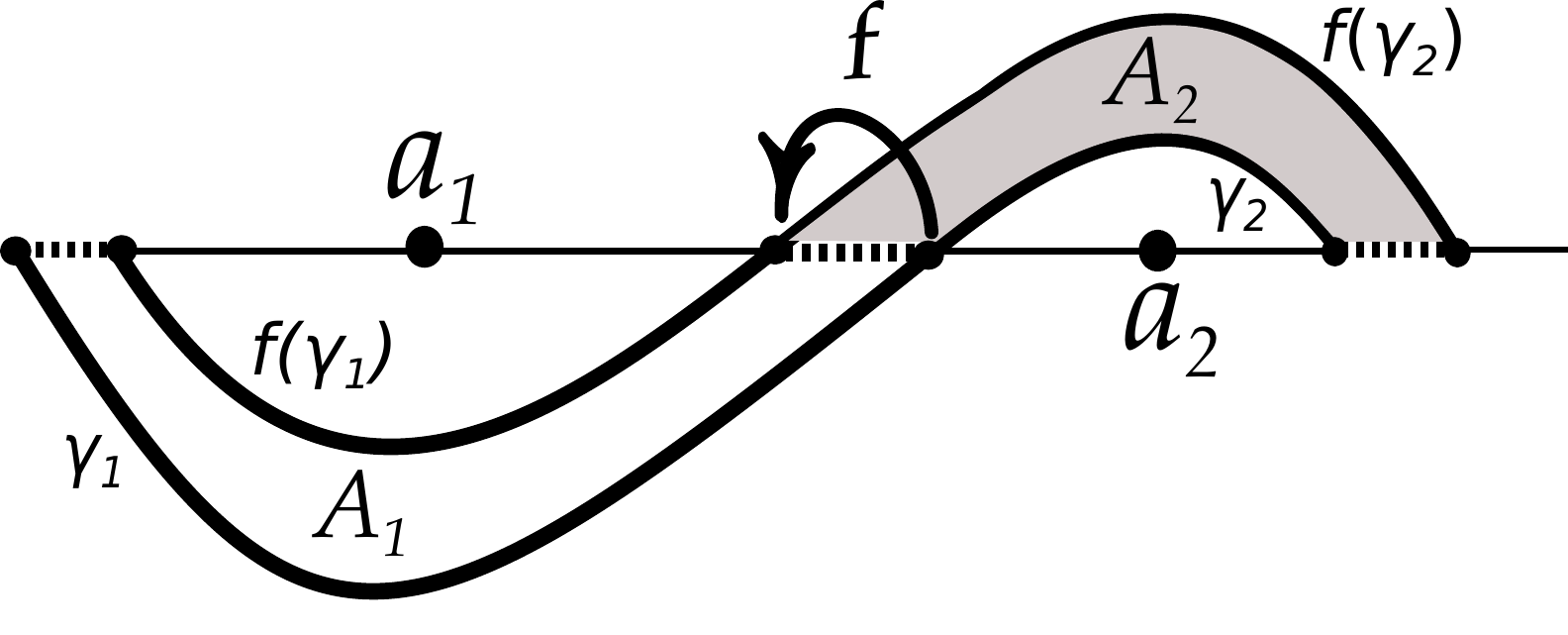}
\end{center}
 \caption{Construction of  $\mathcal E(f)$}\label{fig-Ef}
\end{figure}

Construct a simple loop $\gamma\subset \bbC/\bbZ$ (\emph{le courbe ascendente}, in terms of \cite{Ris}) such that $f(\gamma)$ is above $\gamma$ in $\bbC/\bbZ$. Namely, let $\gamma=\bigcup \gamma_j$; let $\gamma_j$ have its endpoints on $(a_{j-1}, a_j)$ and $(a_j, a_{j+1})$; let $\gamma_j$ be the image of an arc of a  circle under $\psi_j$; let $\gamma_{j}$ be above $\bbR/\bbZ$ if $j$ is even,  and below $\bbR/\bbZ$ if $j$ is odd.
Since $\psi_j$ conjugates $f$ to $z\mapsto \lambda_j z$, the curve $f(\gamma)$ is above $\gamma$ in $\bbC/\bbZ$.
 
 Let $\tilde \Pi \subset \bbC/\bbZ$ be a curvilinear cylinder between $\gamma$ and $f(\gamma)$ (see \autoref{fig-Ef}).
 Consider the complex torus $\mathcal E(f)$ being the quotient space of a neighborhood of $\tilde \Pi$ by the action of $f$. Due to the Uniformization Theorem, there exists $\tau\in \bbH/\bbZ$ and a biholomorphism $\tilde H_{\omega} \colon \mathcal E(f) \to \bbC / (\bbZ+\tau \bbZ)$ that takes $\gamma$ to a curve homotopic to   $\bbR/\bbZ$. Let $\tau (\mathcal E(f)):=\tau$ be the modulus of $\mathcal E(f)$.
 
For $\rot f = \frac pq$, the construction of $\gamma$ should be slightly modified: $\phi_j$ are linearizing charts of $f^q$ at its fixed points, $\gamma_j$ are arcs of circles in charts $\phi_j$, $\gamma=\bigcup \gamma_j$, $\gamma$ winds above repelling periodic points of $f$ and below attracting periodic points of $f$, and we choose $\gamma_j$ so that $f(\gamma)$ is above $\gamma$ in $\bbC/\bbZ$. The rest of the construction is analogous to the case of $\rot f=0$.

\begin{theorem}[\cite{NG2012-en}; see also {\cite[Sec. 6]{XB_NG}}]
\label{th-Buff-constr}
Let $f$ be a hyperbolic circle diffeomorphism with rational rotation number; define $\mathcal E(f)$ as  above. Then the modulus $\tau(\mathcal E(f))$ of the torus $\mathcal E(f)$ equals $\bar \tau(f)$.
\end{theorem}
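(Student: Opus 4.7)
The plan is to express $\mathcal{E}(f)$ as the limit, as $\omega \to 0$ from within $\bbH/\bbZ$, of a family of tori $\mathcal{E}(f+\omega)$, and to check that for such $\omega$ the torus $\mathcal{E}(f+\omega)$ coincides with the Arnold torus $E(f+\omega)$. Combined with the characterization $\bar\tau(f)=\lim_{\omega\to 0,\,\omega\in\bbH/\bbZ}\tau(f+\omega)$ supplied by \autoref{th-XB_NG}, this will give the theorem.

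First I would extend the construction of $\mathcal{E}$ to a small neighborhood of $\omega=0$ in $\overline{\bbH}/\bbZ$. The curve $\gamma$ lies in a compact set disjoint from the fixed (or, for $\rot f=p/q$, periodic) points of $f$, and by construction $f(\gamma)$ is strictly above $\gamma$ in $\bbC/\bbZ$; hence for all $\omega\in\overline{\bbH}/\bbZ$ sufficiently close to $0$, the perturbed map $f+\omega$ still sends $\gamma$ strictly above itself, the linearizing charts $\psi_j$ persist as holomorphic charts near the perturbed periodic data, and the curvilinear cylinder $\tilde\Pi_\omega$ together with the identification $z\sim(f+\omega)(z)$ produces a complex torus $\mathcal{E}(f+\omega)$ depending continuously on $\omega$. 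In particular $\mathcal{E}(f+0)=\mathcal{E}(f)$, and standard continuity of moduli under continuous variation of the gluing yields $\tau(\mathcal{E}(f+\omega))\to\tau(\mathcal{E}(f))$ as $\omega\to 0$.

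Next I would show that for $\omega\in\bbH/\bbZ$ with $\Im\omega>0$ small, the tori $E(f+\omega)$ and $\mathcal{E}(f+\omega)$ are biholomorphic, so $\tau(\mathcal{E}(f+\omega))=\tau(f+\omega)$. The point is that both are obtained by quotienting an open subset of $\bbC/\bbZ$ by the equivalence relation $z\sim(f+\omega)(z)$; the Arnold torus uses a horizontal strip $\Pi$ around $\bbR/\bbZ$ as fundamental domain, while $\mathcal{E}(f+\omega)$ uses $\tilde\Pi_\omega$. Choosing a common open neighborhood $U\subset\bbC/\bbZ$ containing $\bbR/\bbZ\cup\gamma\cup(f+\omega)(\gamma)$ on which $f+\omega$ acts freely and properly discontinuously (which is possible since $\Im\omega>0$ pushes the dynamics away from $\bbR/\bbZ$), both $\Pi\cap U$ and $\tilde\Pi_\omega$ are fundamental domains for the same action on $U$, so the two quotients are canonically biholomorphic. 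Passing to the limit then gives
\[
\tau(\mathcal{E}(f))=\lim_{\omega\to 0}\tau(\mathcal{E}(f+\omega))=\lim_{\omega\to 0}\tau(f+\omega)=\bar\tau(f).
\]

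I expect the main obstacle to lie in the identification step, specifically in verifying uniformly in $\omega$ that $U$ can be chosen so that $\tilde\Pi_\omega$ and the horizontal strip really are fundamental domains for one and the same properly discontinuous action, and that the linearizing charts $\psi_j$ (whose images define $\gamma$) deform holomorphically with $\omega$ in $\overline{\bbH}/\bbZ$ across $\omega=0$, where the corresponding fixed points of $f$ sit on $\bbR/\bbZ$ and the dynamics ceases to be properly discontinuous on any full horizontal strip. The case $\rot f=p/q\neq 0$ will follow by the same scheme applied to $f^q$ with the modified $\gamma$ described in the construction, the only new bookkeeping being that $\mathcal{E}$ is built from $q$ fundamental pieces which are permuted cyclically by $f$.
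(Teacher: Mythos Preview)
The paper does not prove this theorem itself; it is quoted with a citation to \cite{NG2012-en} and \cite[Sec.~6]{XB_NG}. Your outline is exactly the argument used in those references: fix an ascendant curve $\gamma$, observe that for $\omega\in\bbH/\bbZ$ the horizontal strip $\Pi$ and the curvilinear strip between $\gamma$ and $(f+\omega)(\gamma)$ are fundamental domains for one and the same free, properly discontinuous action of $\langle f+\omega\rangle$ on a neighbourhood of $\bbR/\bbZ$, hence $E(f+\omega)\cong\mathcal E(f+\omega)$, and then let $\omega\to 0$ using continuity of the modulus.

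Two small corrections to your sketch. First, there is no need to deform the linearizing charts or $\gamma$ with $\omega$: keep the \emph{same} curve $\gamma$, built once from the charts of $f$ at $\omega=0$, and use $(f+\omega)(\gamma)$ as the upper boundary; since $f(\gamma)$ is strictly above $\gamma$, so is $(f+\omega)(\gamma)$ for all small $\omega\in\overline{\bbH}/\bbZ$, and continuity of $\tau(\mathcal E(f+\omega))$ in $\omega$ is then immediate. Second, in the case $\rot f=p/q$ the quotient defining $\mathcal E(f)$ is still by $f$, not by $f^q$; only the construction of $\gamma$ uses the linearizing charts of $f^q$, and one must arrange that $f(\gamma)$ (not merely $f^q(\gamma)$) lies above $\gamma$. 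Finally, do not forget to check that the biholomorphism $E(f+\omega)\to\mathcal E(f+\omega)$ respects the markings, i.e.\ sends the class of $\bbR/\bbZ$ to the class of $\gamma$; this is clear because $\gamma$ and $\bbR/\bbZ$ are homotopic in the common domain $U$ on which $\langle f+\omega\rangle$ acts.
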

Due to the construction, $\mathcal E(f)$ does not depend on the analytic chart on $\bbR/\bbZ$. This implies \autoref{lem-invar}.

So in order to prove \autoref{lem-realiz}, it is sufficient to find a circle diffeomorphism $f$ with $2m$ fixed points such that  $\tau(\mathcal E(f))=w$. 

\subsection{Cutting \texorpdfstring{$\mathcal E(f)$}{ℰ(f)} by the real line}
\label{subsec-cut}
Let $A_j\subset \tilde \Pi$ be the domain bounded by $\gamma_j, f(\gamma_j),$ and two segments of $\bbR/\bbZ$.
Note that the complex manifold $\tilde A_j:= A_j/f$ is an annulus, and $\tilde A_j \subset \tilde \Pi/f = \mathcal E(f) $. 

Let $\bbH^+=\bbH$ and $\bbH^-$ be the upper and the lower half-planes of $\bbC$ respectively.
From now on, we use the notation $A^{\pm}(\lambda)$ for the following standard annulus:
$ A^{\pm}(\lambda) :=  {\bbH}^{\pm}  / (z  \sim \lambda z).$ 
It is easy to see that its modulus is $\frac{\pi }{|\log \lambda|}$.

\begin{remark}
\label{rem-Ef-glue}
 The linearizing chart $\psi_j$ induces the map from $\tilde A_j$  to the standard annulus $A^{+}(\lambda_j) $ for even $j$,  and to $A^{-}(\lambda_j) $ for odd $j$. This follows from the fact that $\psi_j$ conjugates $f$ to $x \mapsto \lambda_j z$. 
 
 This gives a full description of $\mathcal E(f)$ in terms of multipliers and transition maps of $f$: 
 $\mathcal E(f)$ is biholomorphically equivalent to the quotient space of the annuli $ A^{\pm}(\lambda_j) $,  $\mod A^{\pm}(\lambda_j) =\frac{\pi }{|\log \lambda_j|}$, by the transition maps  $\psi_{j+1}^{-1}\circ\psi_j$ between linearizing charts of $f$.  
\end{remark}

\section{Circle diffeomorphisms with prescribed complex rotation numbers}
In this section, we prove \autoref{lem-realiz}.
\subsection{Scheme of the proof}

\autoref{rem-Ef-glue} above shows that $\mathcal E(f)$ can have any modulus, which  nearly implies \autoref{lem-realiz}. Indeed, we can obtain a complex torus of an arbitrary modulus by glueing some $2m$ annuli by some maps. We only need to show that there are no restrictions on possible multipliers and transition maps for an analytic  circle diffeomorphism. This follows from \autoref{th-moduli-classif} below.

The above arguments together with  \autoref{th-moduli-classif}  show that $\mathcal E(f)$ can be biholomorphic to a standard torus of any modulus; however we must also check that this biholomorphism matches the generators, as required by the definition of $\tau(\mathcal E(f))$, see \autoref{sec-expl-constr} above.
The formal proof of \autoref{lem-realiz}, with the explicit construction of $f$ and the examination of generators, is contained in \autoref{sec-ml-proof}.

\subsection{Moduli of analytic classification of hyperbolic circle diffeomorphisms}

The following theorem is an analytic version of a smooth classification of hyperbolic diffeomorphisms due to G. R. Belitskii, see \cite[Proposition 2]{Bel}. The proof is completely analogous, but we provide it for the sake of completeness.
\begin{theorem}
 \label{th-moduli-classif}
 Suppose that we are given a tuple of $2m$ real numbers $\lambda_j$ with $0<\lambda_{2j-1}<1<\lambda_{2j}$, and a tuple of analytic orientation-preserving diffeomorphisms $\psi_{j;j+1} \colon \bbR^{+} \to \bbR^{-}$ such that $\psi_{j;j+1} (\lambda_j z) =\lambda_{j+1} \psi_{j;j+1}(z)$.

Then there exists an analytic circle diffeomorphism $f$ such that it has $2m$ fixed points with multipliers $\lambda_j$, and $\psi_{j;j+1}$ are transition maps between their  linearization charts $\psi_j$: $\psi_{j;j+1} = \psi_{j+1}^{-1} \circ \psi_{j}$.
\end{theorem}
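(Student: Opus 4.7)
The plan is to construct $f$ one arc at a time, with a fixed linear local model $f(x)=a_j+\lambda_j(x-a_j)$ near each fixed point. First place $2m$ distinct points $a_1,\ldots,a_{2m}$ in cyclic order on $\bbR/\bbZ$, and prescribe $\psi_j(z)=a_j+z$ as the linearizing germ at each $a_j$. Fix an arc $J_j=(a_j,a_{j+1})$ with $\lambda_j<1<\lambda_{j+1}$ (the opposite sign pattern is symmetric). The key observation is that once an analytic orientation-preserving diffeomorphism $\psi_j^{+}\colon\bbR^{+}\to J_j$ is chosen, all the remaining data on this arc are forced: one defines $f|_{J_j}$ by the Schr\"oder identity $f(\psi_j^{+}(t)):=\psi_j^{+}(\lambda_j t)$ and sets $\psi_{j+1}^{-}(s):=\psi_j^{+}(\psi_{j;j+1}^{-1}(s))$ for $s\in\bbR^{-}$. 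The hypothesis $\psi_{j;j+1}(\lambda_j z)=\lambda_{j+1}\psi_{j;j+1}(z)$ then automatically yields $f(\psi_{j+1}^{-}(s))=\psi_{j+1}^{-}(\lambda_{j+1}s)$, so $\psi_{j+1}^{-}$ is a linearization of $f$ at $a_{j+1}$ from the $\bbR^{-}$ side, and the identity $\psi_{j+1}^{-1}\circ\psi_j=\psi_{j;j+1}$ is built in.

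The substantive task is to produce $\psi_j^{+}$ with $\psi_j^{+}(t)=a_j+t+\ldots$ as $t\to 0^{+}$ (matching the germ at $a_j$) and $\psi_j^{+}(t)=a_{j+1}+\psi_{j;j+1}(t)+\ldots$ as $t\to\infty$ (matching the germ at $a_{j+1}$ through the transition; note that $\psi_{j;j+1}(t)\to 0^{-}$ by the functional equation). I expect the analytic realization of these two germ conditions to be the main obstacle: the germs are at disjoint endpoints of $\bbR^{+}$ but $\psi_j^{+}$ must be globally analytic, which precludes a $C^{\infty}$-bump interpolation. Following the scheme of \cite[Proposition~2]{Bel}, one bypasses this by using the dynamics itself as the gluing mechanism: specify $\psi_j^{+}$ on a single fundamental domain $[\lambda_j,1]$ of the action $t\mapsto\lambda_j t$ as any analytic orientation-preserving diffeomorphism onto a subinterval of $J_j$; this determines $f$ on the corresponding subinterval by the Schr\"oder identity, and the extension of $\psi_j^{+}$ (and of $f$) to all of $\bbR^{+}$ by iteration is then automatically analytic, with matching of Taylor jets across every boundary $t=\lambda_j^{n}$ following from the functional equation. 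The free parameters in the choice on $[\lambda_j,1]$ are exactly what allow the two asymptotic germs at $0$ and $\infty$ to be realized.

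Finally, once $\psi_j^{+}$ is built on every arc, the germs of $\psi_j$ coming from the two adjacent arcs $J_j$ and $J_{j-1}$ both equal $z\mapsto a_j+z$ at $0$ and so glue into a single analytic germ; hence $f$ extends analytically across each $a_j$ with multiplier $\lambda_j$. By construction the transition maps between the resulting linearizations are the prescribed $\psi_{j;j+1}$, completing the plan.
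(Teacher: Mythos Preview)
Your plan has a real gap at the final gluing step. You want the one-sided linearizers $\psi_j^{+}$ (built on $J_j$) and $\psi_j^{-}$ (induced from $J_{j-1}$) to share the analytic germ $z\mapsto a_j+z$ at $0$, so that $f$ extends analytically across $a_j$. In the analytic category this cannot be arranged. If $\psi_j^{+}(t)=a_j+t$ on some interval $(0,\varepsilon)$ and $\psi_j^{+}$ is real-analytic on $\bbR^{+}$, then by analytic continuation $\psi_j^{+}(t)=a_j+t$ on all of $\bbR^{+}$, which is unbounded and hence not a diffeomorphism onto the bounded arc $J_j$; so $\psi_j^{+}$ can never have the exact germ $a_j+t$. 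If instead you relax the requirement to agreement of first jets only ($\psi_j^{\pm}(t)=a_j+t+\dots$), nothing in your scheme forces the higher Taylor coefficients of $\psi_j^{+}$ and $\psi_j^{-}$ to coincide: the germ of $\psi_j^{+}$ at $0$ is the Koenigs linearizer of whatever $f|_{J_j}$ you end up building, while the germ of $\psi_j^{-}$ is determined by the independent construction on $J_{j-1}$ together with $\psi_{j-1;j}$. Thus $f$ need not be analytic at $a_j$. The fundamental-domain trick does not rescue this: as written it is circular (defining $f$ on $\psi_j^{+}([\lambda_j,1])$ via the Schr\"oder identity already requires knowing $\psi_j^{+}$ on $[\lambda_j^{2},\lambda_j]$), and in any event the germ at $0$ that results is dictated by the global choice, not a free parameter you can set equal to $a_j+t$.

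The paper sidesteps this over-determination by \emph{not} fixing the $a_j$ in advance. It glues $2m$ copies of $\bbR$ along the prescribed $\psi_{j;j+1}$ into an abstract $C^{\omega}$ one-manifold homeomorphic to a circle, invokes the classical fact that any such manifold is $C^{\omega}$-diffeomorphic to $\bbR/\bbZ$, and takes the resulting global charts as the $\psi_j$. The fixed points $a_j=\psi_j(0)$ and the diffeomorphism $f$ (defined chartwise by $\psi_j\circ(\lambda_j\,\cdot\,)\circ\psi_j^{-1}$) then emerge from the uniformization; the equivariance hypothesis on $\psi_{j;j+1}$ is exactly what makes these local definitions of $f$ agree on overlaps.
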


\begin{remark*}
 It is also true that such $f$ is unique up to analytic conjugacy, so the data above is the modulus of an analytic classification of hyperbolic circle diffeomorphisms. Given $f$, transition maps $\psi_{j;j+1}$ are uniquely defined up to the following equivalence: $$( \dots \psi_{j-1;j} \dots )\sim  (\dots, a_{j}\psi_{j-1;j} (z/a_{j-1}), \dots )$$  for some numbers $a_j>0$ (see \cite[Proposition 3]{Bel}).
\end{remark*}

\begin{proof}
 Take $2m$ copies of the real axis and glue the $j$-th to the $(j+1)$-th copy by the map $\psi_{j;j+1}\colon \bbR^{+} \to \bbR^{-}$. We get a one-dimensional  $C^{\omega}$-manifold homeomorphic to the circle $\bbR/\bbZ$. 
 It is well-known that such manifolds are  $C^{\omega}$-equivalent to $\bbR/\bbZ$. Thus there exists a tuple of $C^{\omega}$ charts $\psi_j \colon \bbR \to \bbR/\bbZ$ such that  $\psi_{j;j+1} = \psi_{j+1}^{-1} \circ \psi_{j}$. Due to the equality $\psi_{j;j+1} (\lambda_j z) =\lambda_{j+1} \psi_{j;j+1}(z)$, the maps $\psi_j (\lambda_j \psi_{j}^{-1} (z))$ glue into the well-defined  $C^{\omega}$ circle diffeomorphism $f$. 
 
Let $a_j = \psi_j(0)$. Note that $f(a_j) = \psi_j (\lambda_j \psi_{j}^{-1} (a_j))  = \psi_j (\lambda_j \cdot 0) = \psi_j(0) = a_j$, so these points are fixed poins of $f$.

On a segment $(a_{j-1}, a_{j+1})$, the map $\psi_j$ conjugates $f = \psi_j \circ \lambda_j \psi_{j}^{-1}$ to $z \mapsto \lambda_j z$, so $\psi_j$ is a linearizing chart of a fixed point $a_j$, and $\lambda_j$ is the multiplier of $f$ at $a_j$.
\end{proof}

\subsection{Proof of \autoref{lem-realiz}, see \autoref{fig-allExpl}}
\label{sec-ml-proof}
Recall that our aim is to construct a circle diffeomorphism $f$ with $2m$ hyperbolic fixed points and the complex rotation number $w$.  

Consider the standard elliptic curve $E_{w} = \bbC/ (\bbZ + w\bbZ)$; let $\bbR/\bbZ$ and $w\bbR/w\bbZ$ be its first and second generator respectively. Take arbitrary $2m$ disjoint simple real-analytic loops $\nu_j \subset E_{w}$ along the second generator. 
Let $\mathcal A_j\subset E_{w} $ be the annulus between $\nu_j$ and $\nu_{j+1}$. Let $\Delta_j = \mathcal A_j\cap \bbR/\bbZ$, then $\Delta_j$ joins boundaries of $\mathcal A_j$.

We are going to construct a circle diffeomorphism $f$ with $2m$ fixed points, and a biholomorphism $H \colon \mathcal E(f) \to E_w$ such that $H (\tilde A_j) = \mathcal A_j \subset E_{w}$, where $\tilde A_j$ are the annuli in $\mathcal E(f)$ bounded by intervals of $\bbR/\bbZ$ as in \autoref{subsec-cut}. This biholomorphism $H$ will take the class of $\gamma$ in  $\mathcal E(f)$ to the class of $\bbR/\bbZ=\bigcup \Delta_j$ in $E_w$.  This will prove that the modulus of $\mathcal E(f)$ equals $w$.  

\textbf{Uniformize $\mathcal A_j$}

For each annulus $\mathcal A_j$ where $j$ is even, take $\lambda_j>1$ such that there exists a biholomorphism  $\tilde \Psi_j \colon  A^{+}(\lambda_j)\to \mathcal A_j $. For each annulus $\mathcal A_j$, where $j$ is odd, take $\lambda_j<1$ such that there exists a biholomorphism  $\tilde \Psi_j \colon  A^{-}(\lambda_j)\to \mathcal A_j $.  Each map $\tilde \Psi_j$ extends analytically to a neighborhood of $A^{\pm}(\lambda_j)$ in $\bbC^*/(z\sim \lambda_j z)$, because the boundaries of $\mathcal A_j$ are real-analytic curves $\nu_j$.
Assume that $\tilde \Psi_j^{-1} (\nu_j)$ is the left boundary of $A^{\pm}(\lambda_j)$, $\tilde \Psi_j^{-1} (\nu_j) = \bbR^-/(z\sim \lambda_j z)$; then $\tilde \Psi_j^{-1} (\nu_{j+1}) =\bbR^+/(z\sim \lambda_j z)$. 

Let $\Psi_j \colon \overline {\mathbb H}^{\pm} \smallsetminus \{0\} \to \mathcal A_j$ be the lift of $\tilde \Psi_j$ to the universal cover of $A^{\pm}(\lambda_j)$; then $\Psi_j (\lambda_j z) = \Psi_j (z)$. 
For each $j$, choose one of the preimages $\delta_j = \Psi_j^{-1}(\Delta_j)$. Let $l_j \in \bbR^-, r_j\in \bbR^+$ be the left and the right endpoint of $\delta_j$ respectively.
Consider the maps $ \psi_{j;j+1} \colon \bbR^{+}  \to \bbR^{-} $,
$$
\psi_{j;j+1}= \Psi_{j+1}^{-1} \circ \Psi_{j},
$$
where we choose the branch of $\Psi_{j+1}^{-1}$ so that $\psi_{j;j+1} (r_j) = l_{j+1}$. Note that $\psi_{j;j+1} (\lambda_j z) =\lambda_{j+1} \psi_{j;j+1}(z)$ because $\Psi_j (\lambda_j z) = \Psi_j (z)$.

\begin{figure}[t]
\begin{center}
\includegraphics[width=\textwidth]{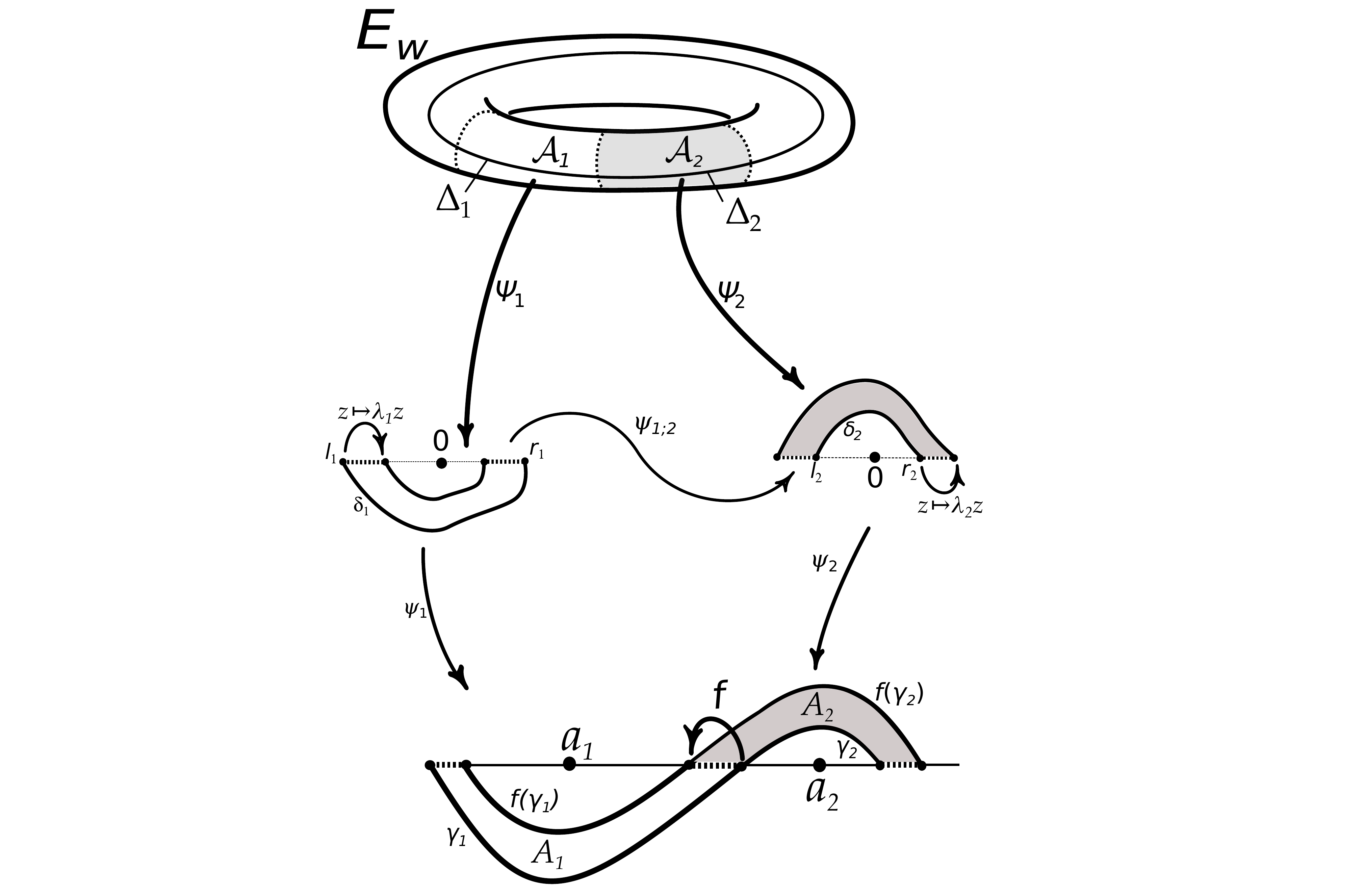}
\end{center}
 \caption{Proof of \autoref{lem-realiz}}\label{fig-allExpl}
\end{figure}

Now, the complex torus $E_w$ is biholomorphically equivalent  to the quotient space of annuli $A^{\pm}(\lambda_j)$ by the maps $\psi_{j;j+1}$. This, together with \autoref{rem-Ef-glue}, motivates the construction of $f$ below.

\textbf{Construct $f$ and a biholomorphism $H\colon \mathcal E(f) \to E_w$.}

 Use \autoref{th-moduli-classif} to construct $f$ with multipliers $\lambda_j$ and transition maps $\psi_{j;j+1}$. 
 
 Let $\psi_j$ be linearization charts of its fixed points; then $\psi_{j;j+1} = \psi_{j+1}^{-1}\circ \psi_j$. Let $\gamma,  \mathcal E(f), A_j, \tilde A_j$ be defined as in \autoref{sec-expl-constr} for this circle diffeomorphism $f$.
 
 Consider the tuple of maps $\Psi_j \circ \psi^{-1}_j$ on $A_j\subset \bbC/\bbZ$. These maps agree on the boundaries of $A_j$ due to the equality
 $$(\Psi_{j+1} \circ \psi^{-1}_{j+1})^{-1} \circ \Psi_j \circ \psi^{-1}_j = \psi_{j+1} \circ \Psi_{j+1}^{-1}\circ \Psi_j \circ \psi^{-1}_j = \psi_{j+1} \circ \psi_{j;j+1} \circ \psi^{-1}_j =\id,$$
 so they define one map on $\tilde \Pi$. They descend to the map $H \colon \mathcal E(f)\to E_w$ because $\psi_j$ conjugates $f$ to $z\mapsto \lambda_j z$ and $\Psi_j (\lambda_j z) = \Psi_j (z)$. Clearly, $H(\tilde A_j) =\mathcal A_j$.
 
 \textbf{$H$ takes the class of $\gamma$ in $\mathcal E(f)$ to the first generator of $E_w$.}
 
 Note that the curves $\psi_j(\delta_j)$ have common endpoints $\psi_j(r_j) = \psi_{j+1} (l_{j+1})$ since $\psi_{j;j+1} (r_j) = l_{j+1}$. So $\gamma':= \bigcup \psi_j(\delta_j)$ is a loop in $\bbC/\bbZ$ that passes above the attractors $ \psi_{2j-1}(0)$  and below the repellors $ \psi_{2j}(0)$ of $f$. So $\gamma'$ is homotopic to $\gamma$ in an annular neighborhood of $\bbR/\bbZ$ covered by linearizing charts of fixed points; the homotopy does not pass through fixed points. Hence $\gamma'$ is homotopic to $\gamma$ in $\mathcal E(f)$, i.e.\ corresponds to the first generator of $\mathcal E(f)$.

Finally, $H(\gamma') =\bigcup \Psi_j(\delta_j) = \bigcup \Delta_j = \bbR/\bbZ\subset E_w$.
This completes the proof of \autoref{lem-realiz}.

\appendix

\section{Derivatives of complex rotation number}
\label{sec-tau'}
In this section we compute $\frac{\partial }{\partial \omega}\overline{\tau}(f_{\omega})$ for a family of circle diffeomorphisms $f_{\omega}$. In particular, this yields \autoref{prop-tau'}. The computation is analogous to that of  \cite[Sec. 2.2.3]{Ris}.
 
Let $f_{\omega}$ be an analytic family of analytic circle diffeomorphisms. Let  $G_{\omega}:=\tilde H_{\omega}^{-1}$ where $\tilde H_{\omega}$ rectifies the complex torus $\mathcal E(f_\omega)$, see \autoref{sec-expl-constr}. Let $\tau(\omega) = \bar \tau(f_{\omega})$. Then 
\begin{equation}
\label{eq-conj}
f_{\omega} (G_{\omega}(z) ) = G_{\omega}(z+\tau(\omega)) \text{ for } z\in G_{\omega}^{-1}(\gamma).
\end{equation}
The Ahlfors--Bers theorem implies that the map $G_{\omega}$, if suitably normalized, depends analytically on $\omega$, see  \cite[Sec. 2.1, Proposition 2]{Ris}.  

Fix $\omega=\omega_0 \in \bbR/\bbZ$; in what follows, all derivatives with respect to $\omega $ are evaluated at $\omega=\omega_0$, and we will omit the lower indices in $f_{\omega}, G_{\omega}$ etc. Here and below $G'$, $f'$ are derivatives with respect to $z$; $G'_{\omega}, f'_{\omega}$, $\tau'_{\omega}$ are derivatives with respect to $\omega$. 

The following proposition clearly implies \autoref{prop-tau'}.

\begin{proposition}
\label{prop-deriv}
Let $f_{\omega}$, $G_{\omega}$ be as above. Then 
$$
\tau'_{\omega} =\int_{\gamma}  \frac{ f'_{\omega} (w)}{f'(w)} ((G^{-1})'(w))^2 dw
$$
where all derivatives are evaluated at $\omega=\omega_0$.
\end{proposition}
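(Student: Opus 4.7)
The plan is to follow the scheme of \cite[Sec.~2.2.3]{Ris}: differentiate the conjugacy identity \eqref{eq-conj} in $\omega$, rewrite the result as an expression for $\tau'_{\omega}$, integrate along a horizontal segment in the universal cover, and kill the extraneous contribution by Cauchy's theorem.

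First I would lift $G_\omega$ to the universal cover, so that $G_\omega\colon \bbC\to\bbC$ satisfies both $G_\omega(z+1)=G_\omega(z)+1$ and $G_\omega(z+\tau(\omega))=f_\omega(G_\omega(z))$. Differentiating the second relation at $\omega=\omega_0$, using the chain-rule identity $G'(z+\tau)=f'(G(z))\,G'(z)$ (obtained by differentiating the same relation in $z$), and dividing through by $G'(z+\tau)$, I get, with $V(z):=G'_\omega(z)/G'(z)$,
\[
\tau'_\omega \;=\; \frac{f'_\omega(G(z))}{f'(G(z))\,G'(z)} \;-\; \bigl(V(z+\tau)-V(z)\bigr).
\]
Integrating this identity from $z_0$ to $z_0+1$, where $z_0$ lies on a lift of $G^{-1}(\gamma)$, and changing variables $w=G(z)$ in the first integral (so $dz=(G^{-1})'(w)\,dw$ and $1/G'(z)=(G^{-1})'(w)$) converts that integral into $\int_\gamma \frac{f'_\omega(w)}{f'(w)}\bigl((G^{-1})'(w)\bigr)^2\,dw$, which is exactly the right-hand side of the claim.

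It then remains to show that $\int_{z_0}^{z_0+1}\bigl(V(z+\tau)-V(z)\bigr)\,dz=0$. The Ahlfors--Bers theorem (as recalled in the excerpt) ensures that $G'_\omega$, and therefore $V$, is holomorphic in $z$ on the lift of a neighborhood of $\tilde\Pi$, while $G(z+1)=G(z)+1$ gives $V(z+1)=V(z)$. Applying Cauchy's theorem to $V$ on the fundamental parallelogram with vertices $z_0,\,z_0+1,\,z_0+1+\tau,\,z_0+\tau$, the two vertical sides cancel by $1$-periodicity of $V$, leaving $\int_{z_0}^{z_0+1}V(z)\,dz=\int_{z_0}^{z_0+1}V(z+\tau)\,dz$, which is what is needed.

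The main obstacle is essentially bookkeeping: one has to choose the lift of $G_\omega$ so that it depends holomorphically on $\omega$ (so that $G'_\omega$ is a well-defined function of $z$) and check that the parallelogram used in Cauchy's theorem lies inside the strip where $V$ is holomorphic. Both conditions are met after the standard normalization of $G_\omega$ and provided $\gamma$ is chosen close enough to $\bbR/\bbZ$ that its lift, together with its $\tau$-translate, bounds a parallelogram contained in the lift of a neighborhood of $\tilde\Pi$.
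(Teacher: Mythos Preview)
Your argument is correct and follows the same scheme as the paper's proof: differentiate the conjugacy relation \eqref{eq-conj} in $\omega$, solve for $\tau'_\omega$, integrate, and use holomorphicity of $V(z)=G'_\omega(z)/G'(z)$ to kill the difference term, then change variables $w=G(z)$. The only cosmetic difference is that the paper integrates directly along the closed curve $G^{-1}(\gamma)$ in the cylinder $\bbC/\bbZ$ and simply remarks that the second and third summands cancel ``because $V$ is holomorphic'', whereas you lift to $\bbC$ and make the Cauchy-theorem step explicit via the fundamental parallelogram and the $1$-periodicity of $V$; these are the same argument.
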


\begin{proof}
We may and will assume that the curve  $\gamma$ in the construction of $\mathcal E(f_{\omega})$ does not depend on $\omega$ in a small neighborhood of $\omega_0$.

Differentiate \eqref{eq-conj} with respect to $\omega$:
$$
f'_{\omega} |_{G(z)}
 + f'|_{G(z)} G'_{\omega}(z) = G'_{\omega} (z+\tau) + G' (z+\tau ) \tau'_{\omega}.
$$
Express $\tau'_{\omega}$ using this equation and the  identity $G'(z+\tau) = f'|_{G(z)} G'(z)$ (this is the derivative of \eqref{eq-conj}). We get 
$$
\tau'_{\omega} =\frac{f'_{\omega}|_{G(z)}}{G'(z+\tau)}+\frac{G'_{\omega}(z)}{G'(z)}   - \frac{G'_{\omega}(z+\tau)}{G'(z+\tau)}. 
$$
Integrate this expression along $G^{-1}(\gamma)$. The second and the third summands cancel out because the function $\frac{G'_{\omega} (z)}{ G'(z)}$ is holomorphic. We obtain 
$$
\tau'_{\omega}=\int_{G^{-1}(\gamma)} \frac{f'_{\omega}|_{G(z)} } {G'(z+\tau)} d z.
$$
Using again $G'(z+\tau)= G'(z) f'|_{G(z)}$ and making the change of variable $w=G(z)$, we get the desired formula.
\end{proof}
\printbibliography
\end{document}